\documentclass{article}

\usepackage{amsmath, amssymb}
\usepackage[english]{babel}
\usepackage{paralist}
\usepackage[colorlinks=true]{hyperref}
\usepackage{array}
\usepackage[latin1]{inputenc}
\usepackage{xcolor}
\usepackage{esint}
\usepackage{latexsym,amsfonts}
\usepackage{amsthm}
\usepackage{array}
\usepackage{cleveref}
\usepackage{enumerate}
\usepackage{mathtools}
\usepackage{verbatim}
\usepackage{graphics}

\vfuzz3pt 
\hfuzz3pt 

\numberwithin{equation}{section}


\newenvironment{thm-ref}[1]
  {\innercustomthm}
  {\endinnercustomthm}

\newenvironment{cor-ref}[1]
  {\innercustomcor}
  {\endinnercustomcor}

\newtheorem{theorem}{Theorem}[section]
\newtheorem{cor}{Corollary}[section]

\newtheorem{lemma}{Lemma}[section]

\newtheorem{prop}{Proposition}[section]

\theoremstyle{definition}

\newtheorem{defn}{Definition}[section]

\newtheorem*{theorem*}{Theorem}

\newtheorem{example}{Example}[section]

\numberwithin{equation}{section}



\newcommand{\C}{\mathbb C} 
\newcommand{\R}{\mathbb R} 
\newcommand{\D}{\mathbb D} 

\def\XXint#1#2#3{{\setbox0=\hbox{$#1{#2#3}{\int}$}
     \vcenter{\hbox{$#2#3$}}\kern-.5\wd0}}

\begin{document}
\title{On the range of harmonic maps in the plane}
\author{Jos\'{e} G. Llorente}
\date{\small{ Departament de Matem\`{a}tiques \\ Universitat Aut\`onoma de Barcelona \\ 08193 Bellaterra.Barcelona \\SPAIN \\
jgllorente@mat.uab.cat }\\ \ \\ \small{\today} }

\maketitle

\let\thefootnote\relax\footnotetext{\hspace{-7pt}\emph{Keywords:}Picard theorem, Liouville theorem, harmonic map,
harmonic function, harmonic polynomial.

MSC2010: 30D20, 30D35, 31A05.

Partially supported by Spanish Ministry of Sciences, Innovation and
Universities under grant MTM2017-85666-P, by Generalitat de
Catalunya under grant 2017 SGR 395 and also by the Basque Government
through the BERC 2018-21 program and by Spanish Ministry of
Sciences, Innovation and Universities: BCAM Severo Ochoa
accreditation SEV-2017-0718.}

\begin{abstract}
In 1994 J. Lewis obtained  a purely harmonic proof of the classical
Little Picard Theorem by showing that if the joint value
distribution of two entire harmonic functions satisfies certain
restrictions then they are necessarily constant. We generalize
Lewis' theorem and the harmonic Liouville theorem in terms of the
range of a harmonic map in the plane.

\end{abstract}

\section{Introduction and main results}\label{sec:1}

\

The Little Theorem of Picard says that if an analytic function
defined in the complex plane $\C$ omits two complex values then it
is constant. Since Picard's original proof, based on the modular
function (the universal covering map of $\C \setminus \{ 0,1 \}$),
different proofs have been found, using Bloch's or Schottky's
theorems, normal families and Montel's theorem, curvature of
metrics, the so called Heuristic Bloch Principle and also brownian
motion (see \cite{Re}, \cite{SZ}, \cite{Sch}, \cite{D}, \cite{K}).
Each new proof has contributed in a significative way to broaden and
develop the scope of Geometric Function Theory.

\

During the $80$s and beginning of the $90$s, a number of works were
devoted to generalize Picard's Theorem to real settings. Rickman
(\cite{Ri}) obtained the first version of Picard's Theorem for
quasiregular maps in higher dimensions. Subsequent work of
Eremenko-Sodin (\cite{ES}) and Eremenko-Lewis (\cite{EL}) culminated
in J. Lewis' abridged, purely PDE proof of Rickman's theorem
(\cite{L}). See \cite{BC} for yet another simplification of
Rickman's theorem based in potential-theoretic methods.

\

The general idea behind Lewis approach is roughly the following: if
a finite family of functions belonging to a specific class (let us
say solutions of a PDE) satisfy certain joint value distribution
restrictions then all the functions in the family are constant.
Although Lewis proof is valid for the so called Harnack functions
(including in particular harmonic functions), the method gives an
interesting new proof of the classical Little Picard Theorem.
Indeed, let us assume that $f: \C \to \C \setminus \{ 0,1 \}$ is
analytic. Associated to $f$ there are two natural entire harmonic
functions, namely $u = \log |f|$ and $v = \log |f-1|$. It is a
simple exercise that if $z \in \C \setminus \{0,1 \} $ then
$$
| \log^+ |z| - \log^+ |z-1|\,| \leq \log 2
$$
and
$$
\max \big ( \log |z| , \log |z-1| \big ) \geq - \log 2
$$
Little Picard's Theorem is therefore a consequence of the following
result, which is contained in \cite{L} with more generality (see
also \cite{R}):

\begin{theorem*}[Lewis]\label{lewisthm}
Let $u$, $v: \C \to \R$ be harmonic functions satisfying
\begin{eqnarray}
|u^+ - v^+ | & \leq &  C \vspace{0.2cm} \label{lewisthm1}\\
\max \big (u,v)& \geq  & -C \label{lewisthm2}
\end{eqnarray}
for some constant $C>0$. Then $u$ and $v$ are constant.
\end{theorem*}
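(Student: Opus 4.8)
The plan is to produce, from the two hypotheses, a single subharmonic (or harmonic) function on $\C$ that is bounded above, and then invoke the Liouville theorem for subharmonic functions. Set $w = u - v$. From \eqref{lewisthm1} we have $|u^+ - v^+| \le C$ everywhere. The key observation is a pointwise algebraic inequality relating $w^+ = (u-v)^+$ to $u^+ - v^+$: if $u \ge v$ then $w^+ = u - v \le u^+ - v^+ + v^+ - v^- \cdot 0$, more carefully one checks the elementary bound $|(u-v)^+ - (u^+ - v^+)| \le \min(u^-, v^-) \le \max(u^-,v^-)$, and hypothesis \eqref{lewisthm2}, which says $\max(u,v) \ge -C$, forces $\min(u^-, v^-) \le C$. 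Hence $w^+ \le u^+ - v^+ + C \le 2C$, and symmetrically $(-w)^+ = (v-u)^+ \le 2C$; therefore $|w| \le 2C$ on all of $\C$. Thus $w = u-v$ is a bounded harmonic function on $\C$, and by the (harmonic) Liouville theorem $w$ is a constant $a$, i.e. $v = u - a$ everywhere.

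Next I would feed this back into \eqref{lewisthm2}: now $\max(u, u-a) \ge -C$ becomes $u \ge -C + \min(0,a) =: -C'$, so $u$ is a harmonic function on $\C$ bounded below. By Liouville again (applied to the bounded-below harmonic function $u + C'$, or directly to $e^{-u}$ which is then a bounded positive... no — rather: a harmonic function on $\C$ bounded on one side is constant, which is the standard Liouville theorem), $u$ is constant, and hence so is $v = u - a$. This completes the argument.

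The step I expect to be the main obstacle is the opening pointwise inequality: one must be careful that the naive manipulation $w^+ \le u^+ - v^+ + (\text{something controlled by } \max(u,v))$ actually holds in all four sign regimes of $(u,v)$. The cleanest route is a short case analysis: (i) if $u,v \ge 0$ then $w = u^+ - v^+$ and $|w| \le C$ directly from \eqref{lewisthm1}; (ii) if $u \ge 0 \ge v$ then $w = u - v = u + |v| = u^+ + v^- $ and $u^+ - v^+ = u^+$, so $w - (u^+-v^+) = v^-$, while \eqref{lewisthm2} gives $u = \max(u,v) \ge -C$ is automatic and instead we use that $\max(u,v)=u\ge -C$ is weak here — so in this regime we bound $w \le u^+ + v^-$ and note $v^- $ need not be small, meaning this regime must be excluded or handled via \eqref{lewisthm2} differently. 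This shows the genuinely delicate point: \eqref{lewisthm2} controls $\min(u^-,v^-)$, so one should split on which of $u^-, v^-$ is the smaller and bound $w^+$ using that one together with \eqref{lewisthm1}; pushing this through in each of the (at most) four cases is the crux, after which the two Liouville applications are routine.
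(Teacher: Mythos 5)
Your opening step is false, and it is fatal to the whole plan. The two hypotheses do not force $w=u-v$ to be bounded: at a point where $u=0$ and $v=-T$, with $T>0$ arbitrarily large, one has $|u^{+}-v^{+}|=0\le C$ and $\max(u,v)=0\ge -C$, yet $u-v=T$. This is precisely the regime your own case (ii) flags, and the fix you sketch does not exist: when $u\ge 0\ge v$ one has $\min(u^{-},v^{-})=u^{-}=0$, so the bound $\min(u^{-},v^{-})\le C$ extracted from \eqref{lewisthm2} carries no information about $v^{-}$, and $w^{+}$ cannot be controlled; your proposed inequality $|(u-v)^{+}-(u^{+}-v^{+})|\le\min(u^{-},v^{-})$ fails at the point above ($T\le 0$ would be required). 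Geometrically, the admissible range $\mathcal{R}_{Le}=\{u+iv:\ |u^{+}-v^{+}|\le C,\ \max(u,v)\ge -C\}$ is a cross-like neighbourhood of the three half-lines $\{u=v\ge 0\}$, $\{u=0,\ v\le 0\}$ and $\{v=0,\ u\le 0\}$; the pointwise bound $|u-v|\le 2C$ holds only on the part where both $u\ge -C$ and $v\ge -C$, while on the two negative half-axes $u-v$ is unbounded. If the hypotheses did imply $|u-v|\le 2C$ everywhere, the statement would reduce to two routine Liouville applications and would be essentially trivial; the whole difficulty of Lewis' theorem is to exclude nonconstant maps whose image escapes along the two legs of the cross.

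So the missing ingredient is whatever replaces the false pointwise bound, and it is substantial. The argument this paper relies on (Lewis \cite{L}; see Theorem 1.3.11 in \cite{R}) is of a different nature: assuming $u$ nonconstant, one first proves a ``signed Harnack'' oscillation lemma giving discs $D(z_n,\rho_n)$ with $u(z_n)=0$, $M(u,z_n,\rho_n)\to\infty$ and a doubling-type control $M(u,z_n,\rho_n)\le C_0\,M(u,z_n,\rho_n/2)$; one then rescales, setting $u_n(z)=u(z_n+\rho_n z)/M_n$ and $v_n(z)=v(z_n+\rho_n z)/M_n$ with $M_n=M(u,z_n,\rho_n)$, extracts locally uniform limits $U,V$ on $\D$, and obtains a contradiction between the normalization of $U$ (namely $U(0)=0$ and $U$ nonconstant with controlled size) and the constraints that \eqref{lewisthm1}--\eqref{lewisthm2} impose on $(U,V)$ after division by $M_n\to\infty$. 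Your second step (from $v=u-a$ and \eqref{lewisthm2} to $u$ bounded below, hence constant by the one-sided Liouville theorem) is correct, but it never gets off the ground because the boundedness of $u-v$ is not available.
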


\

 The proof of Lewis Theorem relies on two fundamental
steps. Assuming that $u$ is nonconstant, the first step consists of
choosing a sequence of discs at which $u$ exhibits a substantial but
controlled oscillation. This sort of ``signed Harnack" lemma is the
most crucial and technical part of the proof. Secondly, a rescaling
method produces two sequences of harmonic functions in the unit disc
capturing the behavior of $u$ and $v$ in the chosen sequence of
discs. The hypothesis (\ref{lewisthm1}) and (\ref{lewisthm2})
together with well known properties of harmonic functions result
finally in a contradiction. See Theorem 1.3.11 in \cite{R} for
details.

\

Given two entire harmonic functions $u$, $v: \C \to \R$, we will
refer to $f= u+iv: \C \to \C$ as the \textit{harmonic map}
associated to $u$ and $v$. So, for us, an entire harmonic map will
be  just a pair of harmonic functions defined in the complex plane;
in particular no univalence assumption is assumed whatsoever. If $f
= u+iv: \C \to \C$ is a harmonic map we denote its range by
$\mathcal{R}_f = f( \C )$.

\

Our main motivation for the results in this paper was to reinterpret
Lewis theorem in terms of the range $\mathcal{R}_f$. As a first
basic example in this direction, the harmonic Liouville Theorem
(\cite{R}) can be rephrased as follows: if $f= u+iv: \C \to \C$ is a
harmonic map such that  $\mathcal{R}_f$ is contained in a half-plane
then there exist $a$, $b$, $c \in \R$ such that $au + bv = c$. In
particular $\mathcal{R}_f$ is a point or a line.

\

As for Lewis Theorem, it can be read as follows: if $f= u+iv: \C \to
\C$ is a harmonic map and
\begin{equation}\label{lewisrange}
\mathcal{R}_f \subset \{u+iv : \, |u^+ -v^+ |\leq C \, , \, \, \max
(u,v) \geq -C \}\equiv \mathcal{R}_{Le}
\end{equation}
for some constant $C>0$ then $f$ is constant. Observe that the set
in the right hand side of (\ref{lewisrange}) is a cross-like
neigbourhood of the half-lines $\{ u=v \geq 0 \}$, $\{ u=0, v \leq 0
\} $ and $\{ v = 0, u\leq 0 \}$.

\

Before stating our main results we need some definitions and
notation. Hereafter $\D = \{ z\in \C : \, |z| < 1 \}$  will denote
the unit disc in the complex plane and $\partial \D = \{
e^{i\theta}: \, \theta \in \R \}$ the unit circle. If $z_0 \in \C$
and $r>0$, $D(z_0 ,r)$ denotes the (open) disc centered at $z_0$ of
radius $r$.

\begin{defn}[Asymptotic directions]\label{defasymp}
Let $\mathcal{R} \subset \C$. We say that $e^{i\theta} \in \partial
\D$ is an \textit{asymptotic direction} of $\mathcal{R}$ if there
exist a sequence of points $\displaystyle \{ w_n \} \subset
\mathcal{R} $ and a sequence of positive numbers $\displaystyle \{
\varepsilon_n \} $ with $\varepsilon_n \to 0$ as $n\to \infty$ such
that
$$
\lim_{n\to \infty} \varepsilon_n w_n = e^{i\theta}
$$
The \emph{set of asymptotic directions} of $\mathcal{R}$ will be
denoted by $\mathcal{D}(\mathcal{R})$.
\end{defn}

Definition \ref{defasymp} is strongly motivated by the rescaling
method which will be extensively used in  section \ref{sec:4}. Note
also that our concept of asymptotic direction is  broader than the
standard one: if, for instance,  $ \displaystyle \mathcal{R} = \{
u+iv\in \C : \, u\geq v^2 \}$ then the positive $u$-semiaxis is the
only asymptotic direction, that is, $\mathcal{D}( \mathcal{R} ) = \{
1 \}$. Below we include some examples clarifying Definition
\ref{defasymp} in some specific situations. We refer to section
\ref{sec:2} for further properties of asymptotic directions.

\begin{example}\label{lewis example}
If $ \mathcal{R}_{Le}$ is the ``Lewis" range set given by
(\ref{lewisrange}), then $\displaystyle \mathcal{D}
(\mathcal{R}_{Le} ) = \displaystyle \{ e^{i\pi /4}, -1 , -i \} $.
\end{example}

\begin{example}
If $f = u+iv$, where $u\equiv C$ for some constant $C$ and $v$ is
not constant then, by the harmonic Liouville Theorem, $\mathcal{R}_f
= \{ C+i\R \} $ and $\mathcal{D}(\mathcal{R}_f ) = \{i, -i\} $.
Analogously, if $v$ is constant and $u$ is not constant then
$\mathcal{D}(\mathcal{R}_f ) = \{ -1 , 1\}$. If $\mathcal{R}_f$ is
an arbitrary line in the complex plane then
$\mathcal{D}(\mathcal{R}_f )$ would of course reduce to the two
(opposite) directions on $\partial \D$ corresponding to that line.
\end{example}

\begin{example}
Let $f = u+iv$, where $u = x$, $\displaystyle v = e^x \sin y$. Then
$\displaystyle \mathcal{R}_f = \displaystyle \{ u+iv: \, |v| \leq
e^{u} \}$ and $\mathcal{D}(\mathcal{R}_f ) = \displaystyle \{-1 \}
\cup \{ e^{i\theta}: \, -\pi /2 \leq \theta \leq \pi /2  \} $.
\end{example}

\begin{example}
Let $f = u+iv$, where $\displaystyle u = e^{x} \sin y$ and
$\displaystyle v = e^{-x} \sin y$. Then $\displaystyle \mathcal{R}_f
= \displaystyle \{ u+iv: \, 0 < |uv| \leq 1 \} \cup \{ 0 \}$ and
$\displaystyle \mathcal{D}(\mathcal{R}_f ) = \displaystyle \{-1, -i,
1, i\} $.
\end{example}

We describe now the main results of the paper. Recall that two
diametrically opposite points on $\partial \D$ are said
\textit{antipodal}. A closer look at the previous examples shows
that, with the exception of the Lewis range set $\mathcal{R}_{Le}$
(Example \ref{lewis example}), the sets of asymptotic directions
$\mathcal{D}(\mathcal{R}_f )$ in the rest of examples contain pairs
of antipodal points. Our first result says that this must be
actually the case.

\begin{theorem}\label{thm1}
Let $f = u+iv: \C \to \C$ be a harmonic map such that
$\mathcal{D}(\mathcal{R}_f)$  contains no  pair of antipodal points.
Then $f$ is constant.
\end{theorem}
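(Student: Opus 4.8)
The plan is to argue by contradiction: assume $f=u+iv$ is nonconstant and show that $\mathcal{D}(\mathcal{R}_f)$ must contain a pair of antipodal points. The natural tool is the rescaling/blow-up technique advertised in the excerpt (section \ref{sec:4}). Without loss of generality $u$ is nonconstant. The key is to choose, for each $n$, a point $z_n\in\C$ and a radius $r_n$ so that on $D(z_n,r_n)$ the function $u$ (or rather a suitable linear combination, or $f$ itself) has controlled oscillation and large size, then look at the rescaled maps $f_n(z)=f(z_n+r_n z)$ on $\D$. After normalizing by a scalar $\varepsilon_n=1/|f(z_n)|\to 0$, we get a sequence $\varepsilon_n f_n$ whose values lie in $\varepsilon_n\mathcal{R}_f$; extracting a convergent subsequence (using Harnack/normal-family compactness for the harmonic components) produces a limit harmonic map $g:\D\to\C$ whose range is contained in the ``limit cone'' of $\mathcal{R}_f$ along the relevant directions, i.e. in the closed cone generated by $\mathcal{D}(\mathcal{R}_f)$.

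Concretely, I would first prove a preliminary fact: if a harmonic map $g$ on $\D$ (or on $\C$) has range inside a closed convex cone $K$ with vertex at $0$ that contains no line through $0$ — equivalently, $K$ is contained in an open half-plane together with $0$ — then the harmonic Liouville statement quoted in the excerpt forces $g$ to take values in a line through $0$, and in fact $g$ is essentially one-dimensional; pushing this, one shows $g$ must be constant or degenerate in a way incompatible with the oscillation we built in. The hypothesis that $\mathcal{D}(\mathcal{R}_f)$ contains no antipodal pair is exactly what guarantees (via a compactness argument on $\partial\D$, to be supplied from section \ref{sec:2}) that $\mathcal{D}(\mathcal{R}_f)$ — hence its closed convex conical hull — lies in an \emph{open} half-plane: a closed set on the circle with no two antipodal points is contained in an open semicircle. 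So the limit map $g$ has range in a half-plane, hence by harmonic Liouville there are real $a,b,c$ with $ag_1+bg_2=c$; since the range sits in a \emph{cone} through the origin we actually get $c=0$, so the image of $g$ is a half-line. But a nonconstant harmonic function cannot have image a half-line (its image is open or a full line or a point), forcing $g$ constant — contradicting the nontrivial normalized oscillation we arranged in the blow-up. This contradiction shows $f$ is constant.

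The main obstacle, as the excerpt itself flags, is the ``signed Harnack'' step: producing the sequence of discs $D(z_n,r_n)$ on which the nonconstant harmonic function exhibits oscillation that is substantial but controlled, so that the rescaled limit $g$ is genuinely nonconstant (and so that the normalizing scalars $\varepsilon_n=1/|f(z_n)|$ really do tend to $0$, placing the limiting directions inside $\mathcal{D}(\mathcal{R}_f)$). This is precisely Lewis' hard lemma, and I expect to invoke (or reprove) the version in Theorem 1.3.11 of \cite{R}. A secondary technical point is verifying that the limit of $\varepsilon_n f(z_n+r_n z)$ has range in the closed cone over $\mathcal{D}(\mathcal{R}_f)$ rather than something larger — this requires that along the chosen points $|z_n|$-scale (i.e. $|f(z_n)|$) genuinely dominates, so that every limiting direction of the rescaled values is an asymptotic direction of $\mathcal{R}_f$ in the sense of Definition \ref{defasymp}. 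Once these are in hand, the half-plane/cone/Liouville endgame is routine.
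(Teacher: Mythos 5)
Your endgame rests on the claim that ``a closed set on the circle with no two antipodal points is contained in an open semicircle,'' and that claim is false. The paper's own motivating example already refutes it: the Lewis range set has $\mathcal{D}(\mathcal{R}_{Le})=\{e^{i\pi/4},-1,-i\}$, which contains no antipodal pair yet is not contained in any (open or closed) half-circle; the three cube roots of unity give an even simpler counterexample. Consequently the hypothesis of Theorem \ref{thm1} does \emph{not} place $\mathcal{D}(\mathcal{R}_f)$, nor its closed conical hull, inside a half-plane, and your half-plane/Liouville finish never gets off the ground. What your argument actually proves is only the special case in which the asymptotic directions do lie in a half-plane --- but that situation is essentially Theorem \ref{thm2} and is far easier; in particular your proof would not recover Lewis' theorem, which is the whole point of Theorem \ref{thm1}. (A secondary issue: the limit map of the blow-up lives on $\D$, not on $\C$, so the entire harmonic Liouville theorem you invoke does not apply as stated; one must instead use that $F(0)=0$ together with a maximum-principle argument. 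Also, normalizing by $\varepsilon_n=1/|f(z_n)|$ is unsound, since the rescaling is centered at zeros of $u$, where $|f(z_n)|$ is not under control; the paper normalizes by $M_n=M(|u|,z_n,r_n)$, which Lewis' lemma makes tend to infinity.)

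The paper's proof has the same first half as yours --- Lewis' lemma plus rescaling to produce a nonconstant limit map $F=U+iV$ on $\D$ whose range lies in the cone over $\mathcal{D}(\mathcal{R}_f)$ --- but the second half is genuinely different and is where the real work lies. From the no-antipodal hypothesis one only gets (after a rotation) the inclusions $\{U=0\}\subset\{V=0\}\subset\{U\geq 0\}$ near the origin. The paper then exploits the local structure of zero sets of planar harmonic functions (the ``cleaning'' Lemma \ref{clean lem}: the zero sets of $U$ and $V$ coincide near $0$ and $UV$ has constant sign there) and runs an iterative angular argument: it extracts directions $e^{i\beta_n^{\pm}}\in\mathcal{D}(\mathcal{R}_f)$ from the image of ever smaller discs, shows these angles are strictly nested and monotone in alternating quadrants, and passes to the limit to produce two antipodal directions in the closed set $\mathcal{D}(\mathcal{R}_f)$ --- the desired contradiction. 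Some device of this kind, manufacturing an antipodal pair rather than assuming a half-plane confinement, is what is missing from your proposal; without it the argument has a genuine gap.
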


\

The following two theorems go in a slightly different direction: we
discuss assumptions on the range of a harmonic map $f = u+iv$ under
which $u$ and $v$ are lineally dependent. Our next result and the
corollary below extend the harmonic Liouville Theorem.

\begin{theorem}\label{thm2}
Let $f = u+iv: \C \to \C$ be a harmonic map such that
$$
\mathcal{D}(\mathcal{R}_f ) \subset \{ e^{i\theta} : \, \alpha -
\pi/2 \leq \theta \leq \alpha + \pi /2 \}
$$
for some $\alpha \in \R$. Then there exists $c\in \R$ such that
$(\cos \alpha ) u + (\sin \alpha )v = c$. In particular
$\mathcal{R}_f$ is a line or a point.
\end{theorem}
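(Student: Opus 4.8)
The plan is to first reduce to the case $\alpha=0$. Replacing $f$ by $e^{-i\alpha}f$ --- still a harmonic map, with range $e^{-i\alpha}\mathcal R_f$ --- turns the hypothesis into $\mathcal D(\mathcal R_f)\subset\{e^{i\theta}:|\theta|\le\pi/2\}$ and the desired conclusion $(\cos\alpha)u+(\sin\alpha)v=c$ into ``$\mathrm{Re}\,f$ is constant''. So I assume $\alpha=0$ from now on, writing $f=u+iv$ for the rotated map, and suppose for contradiction that $u$ is non-constant, hence unbounded above and below. Since the only antipodal pair contained in the closed right half-circle $\{e^{i\theta}:|\theta|\le\pi/2\}$ is $\{i,-i\}$, if $\{i,-i\}\not\subset\mathcal D(\mathcal R_f)$ then $\mathcal D(\mathcal R_f)$ contains no antipodal pair and Theorem~\ref{thm1} forces $f$ constant. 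Hence I may additionally assume $i,-i\in\mathcal D(\mathcal R_f)$; this ``degenerate'' case is where the real work lies.

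The first tool is an escape estimate: for every $\varepsilon>0$ there is $M_\varepsilon>0$ with $u(z)\ge-\varepsilon|f(z)|-M_\varepsilon$ for all $z\in\C$. Indeed, if this failed for some $\varepsilon$, there would be $z_k$ with $\mathrm{Re}\,f(z_k)<-\varepsilon|f(z_k)|-k$; then $|f(z_k)|\to\infty$ and every subsequential limit of $f(z_k)/|f(z_k)|$ is an asymptotic direction $e^{i\theta}$ of $\mathcal R_f$ with $\cos\theta\le-\varepsilon<0$, contradicting the hypothesis. Applying this with a small $\varepsilon$ gives the consequence I will use: on $\{u\le-N\}$ with $N$ large one has $|v|>|u|$, so on each component of that set $v$ keeps a constant sign and $|v|$ is a positive harmonic function.

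The core is then a rescaling argument in the spirit of Lewis, carried out in Section~\ref{sec:4}. Starting from the non-constant harmonic $u$, a ``signed Harnack'' lemma yields discs $D(a_k,r_k)$ and scales $s_k\to\infty$ such that $U_k(z):=s_k^{-1}\big(u(a_k+r_kz)-u(a_k)\big)$ converges locally uniformly on $\D$ to a non-constant harmonic $U_\infty$ with $U_\infty(0)=0$, and --- this is the point on which everything turns --- with $u(a_k)$ comparable to $-s_k$ and $|v(a_k)|$ comparable to $s_k$. On these discs $u$ is negative, so by the escape estimate $v$ is sign-definite and $|v|$ is positive harmonic; normalising $v$ by $|v(a_k)|$ and passing to a subsequence, one extracts a limiting harmonic map on $\{|z|<1/2\}$ whose real part inherits a nontrivial oscillation from $U_\infty$ and whose imaginary part is strictly positive. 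Evaluating this limit at a point where its real part is negative produces points $w_k\in\mathcal R_f$ with $|w_k|\to\infty$ and $w_k/|w_k|\to e^{i\theta}$ with $\cos\theta<0$, i.e. an asymptotic direction of $\mathcal R_f$ in the open left half-plane --- a contradiction. Therefore $u$ is constant; undoing the rotation, the original map satisfies $(\cos\alpha)u+(\sin\alpha)v\equiv c$, and $\mathcal R_f$, being the image of a harmonic map one of whose coordinates is constant, is a line or a point.

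I expect the main obstacle to be precisely the coupling highlighted above: arranging that, on the discs where $u$ realises a controlled oscillation, $v$ is neither much larger nor oscillating much faster than $u$ --- equivalently, that after normalisation the real part of the rescaled limit does not flatten to a constant. If it flattened, the limiting range-direction would collapse to $\pm i$, which the hypothesis does permit in this case, and the contradiction would evaporate. The escape estimate supplies one half of the needed control for free ($|v(a_k)|\gtrsim s_k$ once $u$ is sufficiently negative there), so the delicate part is the matching upper bound $|v(a_k)|=O(s_k)$, together with a bound on the oscillation of $v$ on $D(a_k,r_k)$; here one must use both a careful choice of the discs and the rigidity of $v$ being a positive harmonic function on them. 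This is the analogue of --- and technically heavier than --- the step in Lewis's proof where the value-distribution hypotheses~(\ref{lewisthm1})--(\ref{lewisthm2}) bound $v$ outright on the relevant discs.
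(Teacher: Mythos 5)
Your reduction to $\alpha=0$, the dichotomy via Theorem~\ref{thm1} (allowing you to assume $\pm i\in\mathcal{D}(\mathcal{R}_f)$), and the ``escape estimate'' $u\ge-\varepsilon|f|-M_\varepsilon$ are all correct; the latter is essentially the sublinearity statement that drives the paper's actual proof. The genuine gap is the core of your argument: the existence of discs $D(a_k,r_k)$ and scales $s_k\to\infty$ satisfying simultaneously (i) $u(a_k)\asymp -s_k$, (ii) convergence of $s_k^{-1}\big(u(a_k+r_kz)-u(a_k)\big)$ to a nonconstant limit, and (iii) $|v(a_k)|=O(s_k)$ together with oscillation of $v$ of order $O(s_k)$ on $D(a_k,r_k)$. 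None of this follows from the ``signed Harnack'' lemma you invoke: Lemma~\ref{lewis} and Corollary~\ref{corolewis2} produce discs centred at \emph{zeros} of $u$ with a doubling property for $u$ alone, and give no information about $v$ on those discs. In the setting of Theorem~\ref{thm2} the cone bound $|v|\le a|u|$ of Corollary~\ref{cones-norm}, which is what controls $v$ in the rescaling of Section~\ref{sec:4}, is unavailable precisely because $\pm i$ may be asymptotic directions; a priori $|v(a_k)|/s_k\to\infty$ is possible and, as you yourself observe, in that case the rescaled map flattens, the limiting direction collapses to $\pm i$, and the contradiction evaporates. Appealing to ``a careful choice of the discs and the rigidity of $v$ being a positive harmonic function'' names the difficulty but does not resolve it, so the proof is incomplete at its decisive step.

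For comparison, the paper avoids rescaling here altogether. After rotating so that $\mathcal{D}(\mathcal{R}_f)$ lies in the lower half circle, Lemma~\ref{pro h} shows that $\Phi(u)=\max\big(\sup\{v:\,u+iv\in\mathcal{R}_f\},\,0\big)$ is locally bounded and sublinear, so $v\le\Phi(u)$ pointwise; Theorem~\ref{subhar} then shows that any subharmonic $w$ with $w\le\Phi(u)$ is constant, by applying the relative maximum principle (Lemma~\ref{gardiner}) on each component of $\{u>0\}$ and $\{u<0\}$ with the comparison function $u(z)+\log(|z-z_0|/r)$, and finally invoking the subharmonic Liouville theorem. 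Your escape estimate is equivalent to the sublinearity of $\Phi$, so you already hold the right hypothesis; what your outline lacks is a mechanism of this Phragm\'en--Lindel\"of type that converts it into boundedness of the relevant coordinate without requiring any control of $v$ on the Lewis discs.
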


\begin{cor}\label{alpha}
Let $f = u +iv: \C \to \C $ be a harmonic map. Suppose that there
exist $0 \leq \alpha < 1$, $a \geq 0$ and $b\in \R$ such that
$$
v \leq a|u|^{\alpha} + b
$$
Then $v$ is constant. In particular $\mathcal{R}_f$ is a (horizontal
) line or a point.
\end{cor}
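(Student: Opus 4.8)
The plan is to deduce Corollary~\ref{alpha} from Theorem~\ref{thm2} by showing that the growth bound $v \leq a|u|^{\alpha}+b$ with $0\le\alpha<1$ forces every asymptotic direction of $\mathcal{R}_f$ to lie in the closed lower half-plane, i.e. in $\{e^{i\theta}:\pi\le\theta\le 2\pi\}=\{e^{i\theta}:-\pi\le\theta\le 0\}$, which is precisely the hypothesis of Theorem~\ref{thm2} with $\alpha=-\pi/2$ (so that $(\cos(-\pi/2))u+(\sin(-\pi/2))v=-v=c$, hence $v$ is constant). If $v$ is constant, the harmonic Liouville Theorem as rephrased in the introduction gives that $\mathcal{R}_f$ is a horizontal line or a point.

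First I would take an arbitrary asymptotic direction $e^{i\theta}\in\mathcal{D}(\mathcal{R}_f)$ and unwind Definition~\ref{defasymp}: there are $w_n=u_n+iv_n\in\mathcal{R}_f$ and $\varepsilon_n\to 0^+$ with $\varepsilon_n w_n\to e^{i\theta}$, so $\varepsilon_n u_n\to\cos\theta$ and $\varepsilon_n v_n\to\sin\theta$. I want to rule out $\sin\theta>0$. Suppose $\sin\theta>0$; then for large $n$ we have $v_n\ge \tfrac12 \sin\theta\cdot\varepsilon_n^{-1}\to+\infty$, so in particular $v_n\to+\infty$. Feeding this into the hypothesis $v_n\le a|u_n|^{\alpha}+b$ forces $|u_n|\to\infty$ as well (since $a\ge 0$ and if $|u_n|$ stayed bounded along a subsequence then $v_n$ would be bounded there too). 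Now compare rates: from $v_n\le a|u_n|^{\alpha}+b$ we get $\varepsilon_n v_n \le a\,\varepsilon_n |u_n|^{\alpha}+\varepsilon_n b = a\,\varepsilon_n^{1-\alpha}(\varepsilon_n|u_n|)^{\alpha}+\varepsilon_n b$. As $n\to\infty$ the right-hand side tends to $0$, because $\varepsilon_n|u_n|\to|\cos\theta|$ is bounded, $0\le\alpha<1$ gives $\varepsilon_n^{1-\alpha}\to 0$, and $\varepsilon_n b\to 0$. Hence $\varepsilon_n v_n\to 0$, contradicting $\varepsilon_n v_n\to\sin\theta>0$. Therefore $\sin\theta\le 0$ for every asymptotic direction, i.e. $\mathcal{D}(\mathcal{R}_f)\subset\{e^{i\theta}:-\pi\le\theta\le 0\}$.

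With this inclusion in hand, I would apply Theorem~\ref{thm2} with the choice $\alpha=-\pi/2$ (the half-circle $\{e^{i\theta}:-\pi\le\theta\le 0\}$ is exactly $\{e^{i\theta}:\alpha-\pi/2\le\theta\le\alpha+\pi/2\}$ for that $\alpha$), obtaining a constant $c$ with $(\cos(-\pi/2))u+(\sin(-\pi/2))v=-v=c$; thus $v\equiv -c$ is constant. Finally, by the harmonic Liouville Theorem (in the range formulation recalled in Section~\ref{sec:1}), a harmonic map with $v$ constant has $\mathcal{R}_f\subset\{v=-c\}$, a horizontal line, so $\mathcal{R}_f$ is that line or a single point. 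I expect the only delicate point to be the rate comparison: one must be careful that $|u_n|\to\infty$ genuinely follows (handled by the subsequence argument above, using $a\ge 0$ and $b$ fixed) so that writing $a\varepsilon_n|u_n|^\alpha = a\varepsilon_n^{1-\alpha}(\varepsilon_n|u_n|)^\alpha$ is legitimate and its limit is $0$; everything else is bookkeeping with the definition of asymptotic direction.
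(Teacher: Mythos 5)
Your argument is correct, but it takes a different route from the one the paper intends. The corollary is designed to be a one-line application of Theorem \ref{subhar} (the "even more general result" through which Theorem \ref{thm2} is proved): the hypothesis $v \leq a|u|^{\alpha}+b$ says precisely that $v \leq \Phi(u)$ with $\Phi(t)=a|t|^{\alpha}+\max(b,0)$, which is nonnegative, locally bounded and sublinear since $0\leq\alpha<1$, so Theorem \ref{subhar} (with $w=v$) gives $v$ constant directly, with no mention of asymptotic directions. You instead use only the statement of Theorem \ref{thm2}: you show that the growth bound forces $\mathcal{D}(\mathcal{R}_f)\subset\{e^{i\theta}:-\pi\leq\theta\leq 0\}$ via the estimate $\varepsilon_n v_n \leq a\varepsilon_n^{1-\alpha}(\varepsilon_n|u_n|)^{\alpha}+\varepsilon_n b \to 0$, ruling out $\sin\theta>0$, and then invoke the theorem with the angle $-\pi/2$ (note the harmless clash of notation: $\alpha$ is both the exponent in the corollary and the angle in Theorem \ref{thm2}). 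That deduction is sound; in fact the cautionary remark you flag is not needed, since the identity $\varepsilon_n|u_n|^{\alpha}=\varepsilon_n^{1-\alpha}(\varepsilon_n|u_n|)^{\alpha}$ holds whether or not $|u_n|\to\infty$ (and when $a=0$ the contradiction with $v_n\to+\infty$ is immediate), so the subsequence argument can be dropped. What each approach buys: yours treats Theorem \ref{thm2} as a black box and stays entirely at the level of ranges and asymptotic directions, which fits the way the corollary is stated; the paper's route is shorter and exposes where the real content lies, since the proof of Theorem \ref{thm2} itself manufactures a sublinear majorant $\Phi(u)=\max\{\sup E_u,0\}$ (Lemma \ref{pro h}) and feeds it to Theorem \ref{subhar} --- so your detour through asymptotic directions converts an explicit sublinear bound into angular information only to have it converted back again inside the proof of Theorem \ref{thm2}. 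The final step, that $v\equiv$ const forces $\mathcal{R}_f$ to be a horizontal line or a point via the harmonic Liouville theorem, is fine.
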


Our third result is motivated by the following classical result of
Murdoch(\cite{M}) and Kuran (\cite{Ku}): if $u$ is a nonconstant
harmonic polynomial in $\R^n$, $v$ is  harmonic in $\R^n$ and $u v
\geq 0$ outside a ball, then there is a nonnegative constant
$\lambda$ such that $v = \lambda u$. In particular $v$ is also a
polynomial. Observe that the functions $\displaystyle u = e^x \sin
y$, $v = e^{-x} \sin y$  show that the assumption  that one of the
functions is a polynomial is necessary, even in dimension $2$.

We can restate the Murdoch-Kuran theorem in the plane as follows: if
$f = u+iv: \C \to \C$ is a harmonic map with the additional
assumption that $u$ is a nonconstant harmonic polynomial and $
\displaystyle f(\C \setminus D(0,R)) \subset \{ u+iv: uv\geq 0 \} $
for some $R>0$ then there exists $\lambda \geq 0$ such that $v =
\lambda u$. In particular $v$ is also a polynomial and
$\mathcal{R}_f$ is a line. The next result says that, in the plane,
the union of the two quadrants $\{uv\geq 0 \}$ can be replaced by
any cone symmetric respect to the $u$-axis and having the origin as
a vertex. Note that, even if the aperture of the cone is exactly
$\pi /2$, this is not, a priori, a direct consequence of the
Murdoch-Kuran theorem because a rotation on the $u-v$ plane does not
preserve the fact that one of the functions is a polynomial.
However, we will see that the proof is eventually reduced  to the
case in which both $u$ and $v$ are polynomials.

\begin{theorem}\label{thm3}
Let $f = u+iv: \C \to \C$ be a harmonic map such that $u$ is a
nonconstant harmonic polynomial. Suppose that there exist $a > 0$
and $R>0$ such that
\begin{equation}\label{kuran}
f( \C \setminus D(0,R)) \subset \{ u+iv: \, |u| \leq a |v| \}
\end{equation}
Then there exists $b\in \R$, with $|b| \leq a$ such that $u = bv$.
In particular, $v$ is also a polynomial and $\mathcal{R}_f$ is a
line.
\end{theorem}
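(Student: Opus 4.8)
The plan is to reduce everything to the Murdoch--Kuran theorem by showing that $v$ is itself a harmonic polynomial, and then to exploit the fact that the hypothesis \eqref{kuran} forces the growth and the structure of $v$ to match those of $u$. First I would record that, since $u$ is a nonconstant harmonic polynomial of some degree $d \ge 1$, one has $|u(z)| \le C(1+|z|)^d$ and, by standard estimates for harmonic polynomials, $u$ actually grows like $|z|^d$ along suitable rays; more precisely, writing $u$ in terms of its top-degree part, there is a dense set of directions $e^{i\phi}$ along which $|u(re^{i\phi})| \asymp r^d$ as $r \to \infty$. The containment \eqref{kuran} gives $|v(z)| \ge a^{-1}|u(z)|$ once $|z| \ge R$, so $v$ is unbounded and in fact $|v|$ is comparable to $|u|$ along those rays. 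The first real task is the reverse bound: I want to show $|v(z)| \le C(1+|z|)^d$, i.e.\ that $v$ has polynomial growth of the same degree $d$. This does not follow from \eqref{kuran} alone (that inequality only bounds $u$ in terms of $v$, not the other way around), so here I would bring in Theorem \ref{thm2}, or rather its proof technique via asymptotic directions: since \eqref{kuran} says that outside $D(0,R)$ the point $f(z)$ lies in the double cone $\{|u|\le a|v|\}$ about the imaginary axis, every asymptotic direction of $\mathcal{R}_f$ lies in the closed union of the two arcs around $\pm i$ of half-angle $\arctan a$. If $v$ had super-polynomial growth relative to $u$ this would be consistent, so the genuine input must be a Phragmén--Lindelöf / harmonic-majorization argument: on each sector where $u$ is comparable to $|z|^d$, \eqref{kuran} pins $v$ between $\pm a^{-1}|u|$ up to sign, and a connectedness/continuity argument across $\C\setminus D(0,R)$ fixes the sign of $v$ relative to $u$ on each such sector.

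The key steps, in order, are then as follows. (1) Show $uv$ has constant sign on $\C \setminus D(0,R')$ for some $R' \ge R$: the set $\{u = 0\}$ is an algebraic curve (finitely many analytic arcs going to infinity), $\{v=0\}$ is disjoint from it outside $D(0,R)$ by \eqref{kuran}, and on each of the finitely many unbounded components of $\C \setminus (\{u=0\}\cup D(0,R))$ the sign of $u$ is constant while $|v| \ge a^{-1}|u| > 0$ forces the sign of $v$ to be constant too; a parity/degree argument on how the nodal lines of $u$ partition a large circle shows all these signs agree, so after replacing $v$ by $-v$ if necessary we get $uv \ge 0$ outside $D(0,R')$. (2) Apply the Murdoch--Kuran theorem (quoted in the paragraph preceding the statement) to conclude $v = \lambda u$ for some $\lambda \ge 0$; in particular $v$ is a polynomial of the same degree $d$ as $u$. (3) Feed $v = \lambda u$ back into \eqref{kuran}: it becomes $|u| \le a\lambda|u|$ at every point of $\C \setminus D(0,R)$ where $u \ne 0$, hence $a\lambda \ge 1$, i.e.\ $\lambda > 0$; setting $b = 1/\lambda$ gives $u = bv$ with $0 < b \le a$. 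Undoing the possible sign change from step (1) replaces $b$ by $\pm b$, so $|b| \le a$, which is exactly the claim. Finally, $\mathcal{R}_f = \{(b+i)t : t \in \mathbb{R}\}$ is a line through the origin.

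The main obstacle I anticipate is step (1), the sign-constancy argument: one must control how the nodal set $\{u = 0\}$ of the harmonic polynomial $u$ decomposes the exterior of a large disc, and verify that the forced sign of $v$ (forced by $|v| \ge a^{-1}|u|$ away from $\{u=0\}$, plus continuity of $v$ across $\{u=0\}$ where $v$ need not vanish) is globally consistent rather than flipping from one nodal chamber to the next. Here I would use that near infinity $u$ is dominated by its homogeneous top-degree harmonic part $P_d$, whose zero set is exactly $d$ equally-spaced lines through the origin; the $2d$ sectors they cut out alternate in sign for $P_d$, but $v$ is continuous and nonvanishing across each separating ray outside $D(0,R')$, which is precisely what pins its sign to be the same on two adjacent sectors --- and then, going all the way around, consistent on all of them. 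Lower-order terms of $u$ only perturb the nodal lines into asymptotically straight arcs and do not affect this combinatorial count for $R'$ large enough. Everything after that is a clean invocation of Murdoch--Kuran plus elementary algebra.
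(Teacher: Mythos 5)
Your step (1) is exactly where the argument breaks down. The hypothesis \eqref{kuran} imposes no constraint at points where $u=0$: it only yields $\{v=0\}\setminus D(0,R)\subset\{u=0\}$, so $v$ is perfectly free to vanish on the nodal arcs of $u$ outside every disc --- and in the situation the theorem actually describes ($u=bv$ with $b\neq 0$) it \emph{must} vanish there. Hence your key assertion that ``$v$ is continuous and nonvanishing across each separating ray outside $D(0,R')$'' is unjustified, and the continuity argument pinning the sign of $v$ on adjacent sectors collapses. It is also internally inconsistent: if $v$ really kept one sign on all $2d$ sectors while $u$ alternates sign between them, then $uv$ would take both signs outside every disc, contradicting the conclusion ``$uv\geq 0$ after replacing $v$ by $-v$'' that step (1) is supposed to deliver. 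What you actually need is that the sign of the product $uv$ is the same on all the sector-like chambers, and nothing in the proposal proves this; it is essentially the content of the theorem itself, so it cannot be dispatched by a combinatorial count of nodal chambers. (The preliminary talk of a growth bound $|v|\leq C(1+|z|)^{d}$ via Phragm\'en--Lindel\"of is never carried out either, but that is not the real issue.)

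The paper's proof avoids this trap by reversing the two ingredients. First it establishes that $v$ is a harmonic polynomial with no sign bookkeeping at all: by the tract structure of harmonic polynomials, $\{u\neq0\}\setminus\overline{D}(0,R')$ has exactly $2n$ sector-like components $G_i$, on each of which \eqref{kuran} gives $|v|\geq a^{-1}|u|>0$; therefore every (unbounded) component of $\{v\neq0\}$ contains some $G_i$, so $\{v\neq0\}$ has finitely many components and $v$ is a polynomial by the characterization in \cite{BFHK}. Only then does it invoke Murdoch--Kuran, and not for the pair $(u,v)$ (which would require precisely the sign consistency you cannot prove) but for the two polynomials $av-u$ and $av+u$, whose product $a^{2}v^{2}-u^{2}\geq 0$ outside $D(0,R)$ is an immediate restatement of \eqref{kuran}; this yields $av+u=\lambda(av-u)$ with $\lambda\geq0$, hence $u=bv$ with $|b|\leq a$. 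If you want to salvage your outline, replace step (1) by this two-stage argument: prove polynomiality of $v$ via the finiteness of components of $\{v\neq0\}$, then feed $u\pm av$, rather than $u$ and $v$, into Murdoch--Kuran; your step (3) can stay as is.
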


\

The structure of the paper is as follows. In section \ref{sec:2} we
show some basic properties of asymptotic directions. Section
\ref{sec:3} reviews Lewis Lemma and a slight generalization that
will be needed later. Section \ref{sec:4} discusses the rescaling
method. Finally, sections \ref{sec:5}, \ref{sec:6} and \ref{sec:7}
are devoted to the proofs of Theorem \ref{thm1}, \ref{thm2} and
\ref{thm3}, respectively.

\

\textit{Acknowledgments.} Part of this research has been done when
the author was visiting the Basque Center for Applied Mathematics
(BCAM). The author wishes to thank this institution, specially
Carlos Pérez for support. He also acknowledges the staff and
researchers at BCAM for the hospitality and stimulating work
atmosphere.

\section{Basic facts about asymptotic directions }\label{sec:2}

We remind that for $\mathcal{R} \subset \C$ then
$\mathcal{D}(\mathcal{R})\subset
\partial \D$ denotes the subset of asymptotic directions associated
to $\mathcal{R}$.

\begin{prop}\label{asym-char}
Let $\mathcal{R} \subset \C$. Then $e^{i\theta} \in
\mathcal{D}(\mathcal{R})$ if and only if there is a sequence $\{ w_n
\} \subset \mathcal{R}$ such that $|w_n | \to \infty $ and
$\displaystyle |w_n |^{-1} w_n  \to e^{i\theta}$ as $n\to \infty$.
\end{prop}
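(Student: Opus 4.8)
The plan is to prove both implications by extracting subsequences and comparing the two normalizations $\varepsilon_n w_n$ and $|w_n|^{-1} w_n$. For the ``if'' direction, suppose $\{w_n\} \subset \mathcal{R}$ with $|w_n| \to \infty$ and $|w_n|^{-1} w_n \to e^{i\theta}$. Then setting $\varepsilon_n = |w_n|^{-1}$ we have $\varepsilon_n > 0$, $\varepsilon_n \to 0$, and $\varepsilon_n w_n = |w_n|^{-1} w_n \to e^{i\theta}$, so $e^{i\theta} \in \mathcal{D}(\mathcal{R})$ directly from Definition \ref{defasymp}. This direction requires essentially no work.

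For the ``only if'' direction, suppose $e^{i\theta} \in \mathcal{D}(\mathcal{R})$, so there are $\{w_n\} \subset \mathcal{R}$ and $\varepsilon_n \to 0^+$ with $\varepsilon_n w_n \to e^{i\theta}$. First I would observe that $|w_n| \to \infty$: indeed $|\varepsilon_n w_n| \to |e^{i\theta}| = 1$, so $\varepsilon_n |w_n| \to 1$, hence $|w_n| \sim \varepsilon_n^{-1} \to \infty$ (in particular $|w_n| \neq 0$ for large $n$, and $w_n$ is eventually defined for the second normalization). Then $|w_n|^{-1} w_n = (\varepsilon_n |w_n|)^{-1} (\varepsilon_n w_n)$, and since $\varepsilon_n |w_n| \to 1$ and $\varepsilon_n w_n \to e^{i\theta}$, the product converges to $1 \cdot e^{i\theta} = e^{i\theta}$. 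Discarding the finitely many initial terms where $w_n = 0$ gives the required sequence.

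There is no real obstacle here; the only mild subtlety is handling the possibility that some early $w_n$ vanish, which is dispatched by passing to a tail of the sequence since $|w_n| \to \infty$ forces $w_n \neq 0$ eventually. The proposition is essentially a bookkeeping lemma showing that in Definition \ref{defasymp} one may always take $\varepsilon_n = |w_n|^{-1}$, which is the form in which asymptotic directions will actually be used in the rescaling arguments of section \ref{sec:4}.
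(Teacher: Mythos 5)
Your proof is correct and follows essentially the same route as the paper: both directions hinge on the observation that $\varepsilon_n |w_n| \to 1$, the only cosmetic difference being that you pass from $\varepsilon_n w_n$ to $|w_n|^{-1}w_n$ by dividing by $\varepsilon_n|w_n|$ whereas the paper estimates the difference $\varepsilon_n w_n - |w_n|^{-1}w_n \to 0$. Your explicit remarks that $|w_n| \to \infty$ and that $w_n \neq 0$ eventually are fine and only make the bookkeeping more careful.
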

\begin{proof}
Let $e^{i\theta} \in \mathcal{D}(\mathcal{R})$. Choose $\{w_n \}
\subset \mathcal{R}$  and $\varepsilon _n \to 0$ such that
$\varepsilon_n w_n \to e^{i\theta}$ as $n\to \infty$. In particular
$\varepsilon_n |w_n | \to 1$ and
$$
\varepsilon_n w_n - |w_n|^{-1}w_n = (\varepsilon_n |w_n| -1)
|w_n|^{-1}w_n \to 0 \, \, \, \, \, \text{as}\, \, \, \, n\to \infty
$$
which shows that $|w_n|^{-1}w_n \to e^{i\theta}$ as $n\to \infty$.
Conversely, if $\{ w_n \} \subset \mathcal{R}$ such that $|w_n | \to
\infty$ and $|w_n|^{-1}w_n \to e^{i\theta}$ then take $\varepsilon_n
= |w_n |^{-1} \to 0$. Then $\varepsilon_n w_n \to e^{i\theta}$ so
$e^{i\theta} \in \mathcal{D}(\mathcal{R})$.
\end{proof}

\begin{prop}\label{closed} Let $\mathcal{R} \subset \C$.

\begin{enumerate}[a)]

\item If $\mathcal{R}$ is bounded then $\mathcal{D}(\mathcal{R}) =
\varnothing$.
\item If $\mathcal{R}$ is unbounded then $\mathcal{D}(\mathcal{R})$ is a nonempty,
closed subset of $\partial \D$.
\end{enumerate}
\end{prop}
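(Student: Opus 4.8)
The plan is to prove Proposition \ref{closed} in two parts, matching the statement.

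\textbf{Part a).} If $\mathcal{R}$ is bounded, then every sequence $\{ w_n \} \subset \mathcal{R}$ has bounded modulus, so no such sequence can satisfy $|w_n| \to \infty$. By Proposition \ref{asym-char}, this immediately forces $\mathcal{D}(\mathcal{R}) = \varnothing$.

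\textbf{Part b).} Assume $\mathcal{R}$ is unbounded. Nonemptiness: since $\mathcal{R}$ is unbounded, pick $w_n \in \mathcal{R}$ with $|w_n| \to \infty$; the points $|w_n|^{-1} w_n$ lie on the compact set $\partial \D$, so after passing to a subsequence they converge to some $e^{i\theta} \in \partial \D$, and by Proposition \ref{asym-char} this $e^{i\theta}$ lies in $\mathcal{D}(\mathcal{R})$. For closedness, I would use the sequential characterization together with a diagonal argument: suppose $e^{i\theta_k} \in \mathcal{D}(\mathcal{R})$ with $e^{i\theta_k} \to e^{i\theta}$. For each $k$, Proposition \ref{asym-char} gives a sequence in $\mathcal{R}$ whose normalizations converge to $e^{i\theta_k}$; so I can choose a single point $w_k \in \mathcal{R}$ with $|w_k| > k$ and $\big| \, |w_k|^{-1} w_k - e^{i\theta_k} \, \big| < 1/k$. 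Then $|w_k| \to \infty$ and, by the triangle inequality, $|w_k|^{-1} w_k \to e^{i\theta}$, so $e^{i\theta} \in \mathcal{D}(\mathcal{R})$ by Proposition \ref{asym-char} again. Hence $\mathcal{D}(\mathcal{R})$ is closed.

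There is no real obstacle here; the only point requiring a little care is the closedness argument, where one must extract a \emph{single} approximating point $w_k$ for each $k$ (rather than a whole sequence) so that the resulting sequence $\{ w_k \}$ witnesses $e^{i\theta} \in \mathcal{D}(\mathcal{R})$ — this is exactly the diagonal selection described above, and it works because Proposition \ref{asym-char} lets us demand $|w_k|$ arbitrarily large simultaneously with good angular approximation.
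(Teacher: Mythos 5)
Your proof is correct and follows essentially the same route as the paper: compactness of $\partial \D$ for nonemptiness, and the characterization in Proposition \ref{asym-char} with a diagonal selection of one point $w_k$ per index for closedness. Your extra requirement $|w_k|>k$ is a small but welcome refinement, since Proposition \ref{asym-char} needs $|w_k|\to\infty$, a point the paper's closedness argument leaves implicit.
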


\begin{proof}
Part a) automatically follows from the definition of asymptotic
direction. To prove b), choose a sequence $ \{ w_n \} \subset
\mathcal{R}$ such that $|w_n | \to \infty$ as $n\to \infty$. After
taking a subsequence we may already assume from compactness that
$$
\lim_{n\to \infty} \frac{w_n}{|w_n |} = e^{i\theta}
$$
for some $\theta \in [0, 2\pi)$, which shows that $e^{i\theta} \in
\mathcal{D}(\mathcal{R})$.

\

To see that $\mathcal{D}(\mathcal{R})$ is closed, suppose  that
$e^{i\theta_k} \in \mathcal{D}(\mathcal{R})$ and $ e^{i\theta_k} \to
e^{i\theta}$ as $k\to \infty$, where $\theta_k $, $\theta \in [0,
2\pi )$. By Proposition \ref{asym-char} we can choose $w_k \in
\mathcal{R}$ such that $ | \, |w_k |^{-1} w_k - e^{i\theta_k}| <
1/k$ so $ |w_k |^{-1} w_k \to e^{i\theta} $ as $k\to \infty$. By
Proposition \ref{asym-char}, $e^{i\theta} \in
\mathcal{D}(\mathcal{R})$. Therefore $\mathcal{D}(\mathcal{R})$ is
closed.
\end{proof}

\begin{cor}\label{range-closed}
If $f = u +iv: \C \to \C$ is a nonconstant harmonic map and
$\mathcal{R}_f = f(\C)$ then $\mathcal{D}(\mathcal{R}_f )$ is a
nonempty, closed subset of $\partial \D$.
\end{cor}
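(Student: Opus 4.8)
The plan is to reduce everything to Proposition \ref{closed}. That proposition already tells us that whenever a subset of $\C$ is unbounded, its set of asymptotic directions is a nonempty closed subset of $\partial\D$. So the only thing left to check is that a nonconstant harmonic map has unbounded range, i.e. that $\mathcal{R}_f$ is unbounded whenever $f$ is nonconstant.

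First I would argue by contradiction: suppose $\mathcal{R}_f = f(\C)$ is bounded, say $\mathcal{R}_f \subset D(0,M)$ for some $M>0$. Then for every $z\in\C$ we have $|u(z)| \leq |f(z)| \leq M$ and $|v(z)| \leq |f(z)| \leq M$, so $u$ and $v$ are bounded harmonic functions on the whole plane. By the (classical) Liouville theorem for harmonic functions, both $u$ and $v$ are constant, hence $f = u+iv$ is constant, contrary to our hypothesis. Therefore $\mathcal{R}_f$ must be unbounded.

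Having established that $\mathcal{R}_f$ is unbounded, I would simply apply part b) of Proposition \ref{closed} with $\mathcal{R} = \mathcal{R}_f$ to conclude that $\mathcal{D}(\mathcal{R}_f)$ is a nonempty, closed subset of $\partial\D$, which is exactly the assertion of the corollary. There is no real obstacle here: the entire content of the statement beyond Proposition \ref{closed} is the elementary observation that boundedness of $\mathcal{R}_f$ forces $u$ and $v$ to be bounded entire harmonic functions, and hence constant by Liouville.
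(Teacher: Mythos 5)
Your proposal is correct and follows essentially the same route as the paper: the paper likewise deduces from the (harmonic) Liouville theorem that $\mathcal{R}_f$ is unbounded when $f$ is nonconstant, and then applies part b) of Proposition \ref{closed}. Your only addition is to spell out the elementary contradiction argument (bounded range forces $u$ and $v$ to be bounded, hence constant), which the paper leaves implicit.
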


\begin{proof}
From the harmonic version of Liouville Theorem,  $\mathcal{R}_f$ is
unbounded. Then apply part b) of Proposition \ref{closed}.
\end{proof}

The following elementary proposition collects some particular
situations.
\begin{prop}\label{examp}
Let $f = u+iv : \C \to \C$ be a harmonic map and $\mathcal{R}_f =
f(\C)$.
\begin{enumerate}[a)]
\item If $\mathcal{R}_f$ is bounded then $f$ is constant and $\mathcal{D}(\mathcal{R}_f) =
\varnothing$.
\item If $u$ is constant and $v$ is not constant then $\mathcal{R}_f$ is a vertical line in the $(u,v)$-plane and
$\mathcal{D}(\mathcal{R}_f) = \{-i, i\}$.
\item If $v$ is constant and $u$ is not constant then $\mathcal{R}_f$ is a horizontal line in the $(u,v)$-plane and
$\mathcal{D}(\mathcal{R}_f) = \{ -1, 1\}$.
\end{enumerate}
\end{prop}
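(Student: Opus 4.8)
The plan is to deduce all three parts from the harmonic Liouville theorem (in its one-sided form) together with Propositions \ref{asym-char} and \ref{closed}, so that the argument is essentially a computation from the definition of asymptotic direction.

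For part a), if $\mathcal{R}_f$ is bounded then $u$ and $v$ are bounded harmonic functions on $\C$, hence constant by Liouville; thus $f$ is constant, $\mathcal{R}_f$ is a single point, and $\mathcal{D}(\mathcal{R}_f) = \varnothing$ by part a) of Proposition \ref{closed}. For part b), assume $u \equiv c$ and $v$ nonconstant. The first step is to check that $v(\C) = \R$: if $v$ were bounded above by some $M$, then $M - v$ would be a nonnegative, nonconstant harmonic function on $\C$, which is impossible by the one-sided harmonic Liouville theorem, and similarly $v$ is not bounded below; since $\C$ is connected and $v$ continuous, $v(\C)$ is an interval containing arbitrarily large and arbitrarily negative values, hence $v(\C) = \R$. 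Therefore $\mathcal{R}_f = \{\, c+it : t\in\R\,\}$, the vertical line $\{u = c\}$. For the asymptotic directions I would invoke Proposition \ref{asym-char}: a sequence $w_n = c + it_n$ in $\mathcal{R}_f$ satisfies $|w_n|\to\infty$ precisely when $|t_n|\to\infty$, and then $|w_n|^{-1}w_n \to i$ if $t_n$ is eventually positive and $\to -i$ if $t_n$ is eventually negative; conversely both directions occur, by taking $t_n = n$ and $t_n = -n$. Hence $\mathcal{D}(\mathcal{R}_f) = \{i,-i\}$. Part c) follows by the same argument with the roles of $u$ and $v$ interchanged (the range is then the horizontal line $\{v = c\}$, whose asymptotic directions are $\{-1,1\}$).

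There is no real obstacle here, as the statement is elementary. The only point that deserves a word of care is the claim that a nonconstant entire harmonic function has full range $\R$, which rests on the one-sided harmonic Liouville theorem (no nonconstant harmonic function on $\C$ is bounded on one side) together with the intermediate value property on the connected set $\C$; everything else is a direct computation from Definition \ref{defasymp}, in line with Example \ref{lewis example} and the examples in Section \ref{sec:1}.
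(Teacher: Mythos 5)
Your proof is correct and follows essentially the same route as the paper: part a) via Liouville plus part a) of Proposition \ref{closed}, and parts b), c) via the (one-sided) harmonic Liouville theorem to get $v(\C)=\R$ (resp. $u(\C)=\R$), so the range is a line, after which the asymptotic directions are read off from Definition \ref{defasymp} and Proposition \ref{asym-char}. You simply spell out the details (intermediate value argument, explicit sequences) that the paper leaves as immediate.
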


\begin{proof}
Part a) is a direct consequence of part a) in Proposition
\ref{closed}. To prove b) observe first that, by the harmonic
version of Liouville Theorem, $v(\C ) = \R$ so $\mathcal{R}_f$ is a
vertical line in the $(u,v)$-plane. It easily follows  from the
definition of asymptotic direction that $\mathcal{D}(\mathcal{R}_f)
= \{-i, i\}$. Part c) is analogous.
\end{proof}

\begin{prop}\label{asymp}
Let $\mathcal{R} \subset \C $ be unbounded and let
$\mathcal{D}(\mathcal{R}) \subset \partial \D$ be its associated set
of asymptotic directions.

\begin{enumerate}[a)]
\item If $I\subset \partial \D$ is an open arc such that $I\cap \mathcal{D}(\mathcal{R}) =
\varnothing$ then for any closed arc $J\subset I$ there exists
$\rho> 0 $ such that $\displaystyle \mathcal{R} \cap \{ re^{i\theta}
\, : \, r\geq \rho \, , \, e^{i\theta} \in J \} = \varnothing$.
\item If $I\subset \partial\D$ is an open arc and $\displaystyle \mathcal{R} \cap \{ re^{i\theta}
\, : \, r\geq \rho \, , \, e^{i\theta} \in I \} = \varnothing$ for
some $\rho >0$ then $I\cap \mathcal{D}(\mathcal{R}) = \varnothing$.
\end{enumerate}
\end{prop}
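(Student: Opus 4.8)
The plan is to prove both parts by contradiction, using the sequential description of asymptotic directions furnished by Proposition \ref{asym-char} (namely that $e^{i\theta}\in\mathcal{D}(\mathcal{R})$ iff there is $\{w_n\}\subset\mathcal{R}$ with $|w_n|\to\infty$ and $|w_n|^{-1}w_n\to e^{i\theta}$). Both arguments are short compactness arguments; the role of the hypotheses is precisely to feed a sequence into Proposition \ref{asym-char} and produce a forbidden asymptotic direction.

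For part a), suppose the conclusion fails for some closed arc $J\subset I$. Then for every $\rho>0$ the set $\{re^{i\theta}:r\geq\rho,\ e^{i\theta}\in J\}$ meets $\mathcal{R}$; taking $\rho=n$ we obtain $w_n\in\mathcal{R}$ of the form $w_n=r_ne^{i\theta_n}$ with $r_n\geq n$ and $e^{i\theta_n}\in J$. In particular $|w_n|=r_n\to\infty$. Since $J$ is a compact subset of $\partial\D$, after passing to a subsequence we may assume $e^{i\theta_n}\to e^{i\theta}$ for some $e^{i\theta}\in J$. As $|w_n|^{-1}w_n=e^{i\theta_n}$, Proposition \ref{asym-char} yields $e^{i\theta}\in\mathcal{D}(\mathcal{R})$, while $e^{i\theta}\in J\subset I$; this contradicts $I\cap\mathcal{D}(\mathcal{R})=\varnothing$.

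For part b), suppose instead that there were a point $e^{i\theta}\in I\cap\mathcal{D}(\mathcal{R})$. By Proposition \ref{asym-char} pick $\{w_n\}\subset\mathcal{R}$ with $|w_n|\to\infty$ and $|w_n|^{-1}w_n\to e^{i\theta}$. Since $I$ is an open arc containing $e^{i\theta}$, for all sufficiently large $n$ we have both $|w_n|^{-1}w_n\in I$ and $|w_n|\geq\rho$, so $w_n$ lies in $\mathcal{R}\cap\{re^{i\theta}:r\geq\rho,\ e^{i\theta}\in I\}$, which was assumed empty — a contradiction. Hence $I\cap\mathcal{D}(\mathcal{R})=\varnothing$.

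There is no real obstacle here: the only point requiring a little care is ensuring in part a) that the extracted sequence actually escapes to infinity, which is why one tests the assumed-false conclusion against the radii $\rho=n$ rather than a single $\rho$; everything else is immediate from Proposition \ref{asym-char} and the compactness of a closed arc (for a)) or the openness of $I$ (for b)).
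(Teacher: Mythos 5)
Your proposal is correct and follows essentially the same route as the paper: in part a) you negate the conclusion to extract a sequence $w_n=r_ne^{i\theta_n}\in\mathcal{R}$ with $r_n\to\infty$ and $e^{i\theta_n}$ in the compact arc $J$, pass to a convergent subsequence, and invoke Proposition \ref{asym-char}; in part b) you use openness of $I$ and $|w_n|\to\infty$ to contradict the emptiness hypothesis, exactly as the paper does. No gaps.
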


\begin{proof}
Suppose that a) does not hold. Then there are sequences $r_n \to
\infty$ and $\displaystyle e^{i\theta_n} \in J$ such that
$\displaystyle w_n = r_n e^{i\theta_n} \in \mathcal{R}$. Since $J$
is closed, there is a subsequence  $\displaystyle e^{i\theta_{n_k}}
\to e^{i\theta}$ as $k\to \infty$, for some $\displaystyle
e^{i\theta} \in J$. Then $\displaystyle |w_{n_k}|^{-1}w_{n_k} \to
e^{i\theta}$, so $\displaystyle e^{i\theta} \in
\mathcal{D}(\mathcal{R})$ by Proposition \ref{asym-char}. This
contradiction proves a).

\

To prove b), suppose that $\displaystyle e^{i\theta} \in I \cap
\mathcal{D}(\mathcal{R})$. By Proposition \ref{asym-char}, choose
$\displaystyle w_n = |w_n|e^{i\theta_n} \in \mathcal{R}$  such that
$|w_n | \to \infty$ and $e^{i\theta_n } \to e^{i\theta}$ as $n\to
\infty$. Since $I$ is open and $|w_n| \to \infty$ then
$\displaystyle e^{i\theta_n} \in I$ and $|w_n| \geq \rho$ for $n$
large enough, so for such $n$'s, $w_n \in \displaystyle \mathcal{R}
\cap \{ re^{i\theta} \, : \, r\geq \rho \, , \, e^{i\theta} \in J
\}$. This contradicts the hypothesis and therefore shows b).
\end{proof}

We will need the following elementary lemma.

\begin{lemma}\label{antipod}
If $ E \subset \partial \D$ is closed and contains no pair of
antipodal points then there is $\alpha \in [0, 2\pi )$ such that $
\displaystyle \{-ie^{i\alpha}, e^{i\alpha}, ie^{i\alpha} \} \subset
\partial \D \setminus E$.
\end{lemma}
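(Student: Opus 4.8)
The statement is a compactness/covering fact about $\partial\D$: a closed subset $E$ containing no antipodal pair cannot meet every "orthogonal triple" $\{-ie^{i\alpha},e^{i\alpha},ie^{i\alpha}\}$. The natural approach is by contradiction together with a connectedness argument on the circle. Suppose that for every $\alpha\in[0,2\pi)$ at least one of the three points $-ie^{i\alpha}$, $e^{i\alpha}$, $ie^{i\alpha}$ lies in $E$. Since $E$ is closed and contains no antipodal pair, its complement $U=\partial\D\setminus E$ is open and meets every pair of antipodal points (for each $e^{i\theta}$, at least one of $e^{i\theta},-e^{i\theta}$ is in $U$). The goal is to produce a single $\alpha$ with all three of $e^{i\alpha}$, $ie^{i\alpha}=e^{i(\alpha+\pi/2)}$, $-ie^{i\alpha}=e^{i(\alpha-\pi/2)}$ in $U$.

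First I would reduce to a statement about arcs. Because $E$ is closed and antipode-free, and $\partial\D$ is compact, $E$ is contained in an open arc of length $<\pi$: indeed, $E$ and its antipode $-E$ are disjoint compact sets, hence at positive distance, so one can separate them by a point and its antipode, placing $E$ inside an open semicircle. Fix such an open arc $A=\{e^{i\theta}:\theta_0<\theta<\theta_0+\ell\}$ with $\ell<\pi$ and $E\subset A$; equivalently, the closed arc $A'=\partial\D\setminus A$ of length $2\pi-\ell>\pi$ lies entirely in $U$. Now I want to find $\alpha$ such that the three points $e^{i\alpha},e^{i(\alpha\pm\pi/2)}$ all lie in $A'$. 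Since $A'$ has length strictly greater than $\pi$, write its angular interval as $(\beta,\beta+L)$ with $L>\pi$. I need $\alpha,\alpha+\pi/2,\alpha-\pi/2$ all in $(\beta,\beta+L)\pmod{2\pi}$; equivalently the three points $\alpha-\pi/2,\alpha,\alpha+\pi/2$, which span a total length $\pi<L$, fit inside the arc of length $L$. Choosing $\alpha=\beta+L/2$ (the midpoint) places $\alpha-\pi/2$ and $\alpha+\pi/2$ at distance $L/2>\pi/2$ from the endpoints, so all three lie in the open arc $A'\subset U=\partial\D\setminus E$. This gives the desired $\alpha$, contradicting the assumption that $E$ meets every such triple, and hence proving the lemma.

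The one point that needs a little care is the separation step: showing $E$ fits inside an open arc of length $<\pi$. Here I would argue that if not, then for every open semicircle $S$ we have $E\not\subset S$, so in particular $E$ is not contained in $\{e^{i\theta}:\theta_0<\theta<\theta_0+\pi\}$ for any $\theta_0$; combined with closedness this forces $E$ to contain, for a suitable limiting $\theta_0$, two points whose angular separation is exactly $\pi$ — an antipodal pair — contradiction. (Concretely: let $\theta_0=\inf\{\theta:e^{i\theta}\in E\}$ after rotating so that $E$ misses a neighborhood of a point; the "width" of $E$, i.e.\ the length of the shortest closed arc containing it, is then $<\pi$ precisely because $E$ has no antipodal pair.) This is the main — though still elementary — obstacle; once $E$ is trapped in a short arc, the midpoint choice above finishes immediately.
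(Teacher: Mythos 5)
Your argument rests on a reduction that is false: a closed, antipode\nobreakdash-free subset of $\partial \D$ need \emph{not} be contained in an open arc of length less than $\pi$. Take $E=\{1,\,e^{2\pi i/3},\,e^{-2\pi i/3}\}$ (the cube roots of unity). No two of these points are antipodal, since all pairwise angular distances equal $2\pi/3$, yet every closed arc containing $E$ has length at least $4\pi/3>\pi$, so $E$ lies in no semicircle. Your justification does not repair this: the fact that $E$ and $-E$ are disjoint compact sets at positive distance does not produce a diameter separating them, and the ``width'' of an antipode-free set can indeed exceed $\pi$, as the same example shows; so the fallback argument in your last paragraph fails as well. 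The failure is fatal for the rest of the proof, not just a missing detail: for this $E$ the complement $\partial\D\setminus E$ contains no arc of length greater than $\pi$ (its maximal arcs have length $2\pi/3$), so there is no ``long arc'' whose midpoint triple you could take. The lemma is still true for this $E$ (e.g. $\alpha=\pi$ works, since $-1,\,i,\,-i\notin E$), but the witnessing triple necessarily picks its three points from three \emph{different} components of the complement, which your strategy cannot produce.

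The paper's proof avoids any containment of $E$ in a short arc and instead uses the no-antipodal-pair hypothesis twice. Assuming $1\notin E$, it first produces an antipodal pair $\{w,-w\}\subset\partial\D\setminus E$: let $\theta_0$ be the supremum of $\theta\in(0,\pi)$ with $(-\theta,\theta)$ avoiding $E$; if $0<\theta_0<\pi$ and, say, $e^{i\theta_0}\in E$, then $-e^{i\theta_0}\notin E$ by hypothesis, and closedness of $E$ gives $-e^{i(\theta_0-\epsilon)}\notin E$ for small $\epsilon$, so $w=e^{i(\theta_0-\epsilon)}$ works. Then, since $E$ has no antipodal pair, at least one of $\pm iw$ also avoids $E$, and the triple based at that point, namely $\{w,-w,iw\}$ or $\{w,-w,-iw\}$, has the required form $\{-ie^{i\alpha},e^{i\alpha},ie^{i\alpha}\}$. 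If you want to salvage your write-up, replace the false ``trap $E$ in a semicircle'' step by an argument of this kind.
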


\begin{proof}
We prove first that $\partial \D \setminus E$ contains a pair of
antipodal points. Assume that $1\notin E$. If $-1 \notin E$ then we
are done, so suppose that $-1 \in E$. Define
$$
\theta_0 = \sup \{ \theta \in (0, \pi ) \, : (-\theta , \theta )
\subset \partial \D \setminus E \}
$$
Observe that $\theta_0 >0$. If $\theta_0 = \pi $ then $E = \{ -1 \}$
and obviously $\partial D \setminus E$ contains a pair of antipodal
points, so suppose that $0 < \theta_0 < \pi$. Then either
$\displaystyle e^{i\theta_0} \in E$ or $\displaystyle e^{-i\theta_0}
\in E$. Let us assume that $\displaystyle e^{i\theta_0} \in E$, so
\begin{equation*}
\displaystyle \{ e^{i\theta} \, : |\theta | < \theta_0 \} \subset
\partial \D \setminus E
\end{equation*}
By  the hypothesis on $E$, $\displaystyle -e^{i\theta_0 } \notin E$
and, since $E$ is closed, there is $0 < \epsilon < \theta_0$ such
that $\displaystyle -e^{i (\theta_0 -\epsilon )} \notin E$. Then
$\partial \D \setminus E$ contains the pair of antipodal points
$\displaystyle \{ e^{i(\theta_0 - \epsilon )}, -e^{i(\theta_0 -
\epsilon )} \}$. Now, since $E$ contains no pair of antipodal
points, at least one of the points $ie^{i(\theta_0 -\epsilon)}$, $-i
e^{i(\theta_0 -\epsilon )}$ must also lie in $\partial \D \setminus
E$, which completes the proof.
\end{proof}

\

For $\xi = e^{i\alpha} \in \partial \D$ and $0 < \phi < \pi /2$ we
denote by $\displaystyle \mathcal{C}_{\xi ,\phi}$ (resp.
$\displaystyle \mathcal{C}_{\xi , \phi}^+ $) the whole cone (resp.
half cone) with vertex at the origin, axis parallel to $\xi$ and
aperture $2\phi$, that is:
\begin{eqnarray*}
\mathcal{C}_{\xi ,\phi} & = \{ te^{i\theta} \,
 : t\in \R \, , \, |\theta - \alpha | \leq \phi \} \vspace{0.2cm}\\
 \mathcal{C}_{\xi , \phi}^+  & =  \{ te^{i\theta} \, : t\geq 0 \, ,
 \,
|\theta - \alpha | \leq \phi \}
\end{eqnarray*}

\begin{lemma}\label{range-antipod}
Let $f = u +iv : \C \to \C$ be a  harmonic map and $\mathcal{R}_f =
f(\C )$. Suppose that $\mathcal{D}(\mathcal{R}_f)$  contains no pair
of antipodal points. Then there are $\xi \in \partial \D$, $0 < \phi
< \pi /2$ and $\rho >0$ such that
$$
 \mathcal{R}_f \subset  \overline{D}(0, \rho ) \cup  \big (  \C \setminus  (\mathcal{C}_{i\xi ,
 \phi} \cup \mathcal{C}_{\xi , \phi}^+ \cup ) \big)
$$
that is, outside some disc centered at the origin, $\mathcal{R}_f$
does not intersect the union of a whole cone and a half cone having
the origin as a vertex and orthogonal axes.
\end{lemma}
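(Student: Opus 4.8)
The plan is to deduce the statement from Lemma~\ref{antipod} and part~a) of Proposition~\ref{asymp}; essentially all the substance has already been isolated in those two results, and what remains is bookkeeping. First I would dispose of the degenerate case: if $f$ is constant then $\mathcal{R}_f$ is a single point and the asserted inclusion holds for any $\xi\in\partial\D$, any $\phi\in(0,\pi/2)$, and any $\rho$ with $\mathcal{R}_f\subset\overline{D}(0,\rho)$. So assume $f$ nonconstant. By the harmonic Liouville theorem $\mathcal{R}_f$ is unbounded, and by Corollary~\ref{range-closed} the set $E:=\mathcal{D}(\mathcal{R}_f)$ is a nonempty closed subset of $\partial\D$ which, by hypothesis, contains no pair of antipodal points. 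Lemma~\ref{antipod} then gives $\alpha\in[0,2\pi)$ with $\{-ie^{i\alpha},\,e^{i\alpha},\,ie^{i\alpha}\}\subset\partial\D\setminus E$; set $\xi=e^{i\alpha}$.

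Next, since $E$ is closed, the open set $\partial\D\setminus E$ is a neighbourhood of each of the three points $\xi$, $i\xi$, $-i\xi$, so I can choose open arcs $I_1\ni\xi$, $I_2\ni i\xi$, $I_3\ni -i\xi$ with $I_j\cap E=\varnothing$, and then fix a single $\phi\in(0,\pi/2)$ small enough that the closed arcs
$$
J_1=\{e^{i\theta}:|\theta-\alpha|\le\phi\},\quad J_2=\{e^{i\theta}:|\theta-\alpha-\pi/2|\le\phi\},\quad J_3=\{e^{i\theta}:|\theta-\alpha+\pi/2|\le\phi\}
$$
satisfy $J_j\subset I_j$ for $j=1,2,3$. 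Applying Proposition~\ref{asymp}~a) to each pair $(I_j,J_j)$ produces $\rho_j>0$ with $\mathcal{R}_f\cap\{re^{i\theta}:r\ge\rho_j,\ e^{i\theta}\in J_j\}=\varnothing$; put $\rho=\max(\rho_1,\rho_2,\rho_3)$.

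It then remains to verify the elementary set inclusion $\mathcal{C}_{i\xi,\phi}\cup\mathcal{C}^+_{\xi,\phi}\subset\overline{D}(0,\rho)\cup\{re^{i\theta}:r\ge\rho,\ e^{i\theta}\in J_1\cup J_2\cup J_3\}$: a point of the half cone $\mathcal{C}^+_{\xi,\phi}$ is $te^{i\theta}$ with $t\ge0$ and $e^{i\theta}\in J_1$, while a point of the whole cone $\mathcal{C}_{i\xi,\phi}$ is $te^{i\theta}$ with $e^{i\theta}\in J_2$ when $t\ge0$ and $e^{i\theta}\in J_3$ when $t\le 0$; in all cases the point lies in $\overline{D}(0,\rho)$ if its modulus is $<\rho$, and otherwise it has modulus $\ge\rho$ and direction in $J_1\cup J_2\cup J_3$. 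Intersecting this inclusion with $\mathcal{R}_f$ and invoking the previous paragraph removes the second set on the right, giving $\mathcal{R}_f\cap(\mathcal{C}_{i\xi,\phi}\cup\mathcal{C}^+_{\xi,\phi})\subset\overline{D}(0,\rho)$, which is exactly the asserted inclusion. There is no genuine obstacle here — the argument merely packages Lemma~\ref{antipod} and Proposition~\ref{asymp} — and the only point requiring mild care is this last step: matching the whole cone $\mathcal{C}_{i\xi,\phi}$ against the two arcs $J_2$ and $J_3$, and selecting one $\phi<\pi/2$ that simultaneously works for all three arcs.
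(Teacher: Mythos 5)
Your proof is correct and follows essentially the same route as the paper: Corollary~\ref{range-closed} plus Lemma~\ref{antipod} to locate $\xi$, $i\xi$, $-i\xi$ in the open complement of $\mathcal{D}(\mathcal{R}_f)$, then Proposition~\ref{asymp}~a) applied to small closed arcs around those three points. You merely spell out the arc/cone bookkeeping (and the trivial constant case) that the paper's two-line proof leaves implicit.
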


\begin{proof}
By Corollary \ref{range-closed} and Lemma \ref{antipod}, there
exists $ \xi = e^{i\alpha} \in \partial \D$ such that $\displaystyle
\{ -i\xi , \xi , i\xi \}  \subset \partial \D \setminus
\mathcal{D}(\mathcal{R}_f)$. Since $\partial \D \setminus
\mathcal{D}(\mathcal{R}_f)$ is open, the conclusion follows from
part a) of Proposition \ref{asymp}.
\end{proof}

\begin{cor}\label{cones-norm}
Let $g: \C \to \C$ be a harmonic map such that
$\mathcal{D}(\mathcal{R}_g )$ contains no antipodal points. Then
there are $\theta \in [0, 2\pi)$, $\rho >0$ and $a>1$ such that if
$f = e^{i\theta} G$ then
$$
\mathcal{R}_f \subset \overline{D}(0, \rho) \cup \big ( \{ u+iv :
|v| \leq a|u|\} \setminus \{u+iv :  |v| \leq -\frac{1}{a} u  \} \big
)
$$
\end{cor}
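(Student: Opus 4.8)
The plan is to obtain this as a direct consequence of Lemma~\ref{range-antipod}: rotate the range so that the full cone produced there becomes symmetric about the imaginary axis while the half cone points along the negative real axis, and then rewrite the two resulting cone conditions in the required form $|v|\le a|u|$.

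First I would apply Lemma~\ref{range-antipod} to $g$. Since $\mathcal{D}(\mathcal{R}_g)$ contains no pair of antipodal points, this gives $\xi=e^{i\alpha}\in\partial\D$, $0<\phi<\pi/2$ and $\rho>0$ with
$$
\mathcal{R}_g\subset \overline{D}(0,\rho)\cup\big(\C\setminus(\mathcal{C}_{i\xi,\phi}\cup\mathcal{C}_{\xi,\phi}^{+})\big).
$$
Now let $\theta\in[0,2\pi)$ satisfy $\theta\equiv\pi-\alpha\pmod{2\pi}$ and set $f=e^{i\theta}g$, so that $\mathcal{R}_f=e^{i\theta}\mathcal{R}_g$. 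Multiplication by $e^{i\theta}$ is a rotation, hence it fixes $\overline{D}(0,\rho)$ and sends $\mathcal{C}_{\eta,\phi}$ to $\mathcal{C}_{e^{i\theta}\eta,\phi}$ and $\mathcal{C}_{\eta,\phi}^{+}$ to $\mathcal{C}_{e^{i\theta}\eta,\phi}^{+}$. Since $e^{i\theta}\xi=-1$ and $e^{i\theta}(i\xi)=-i$, and since a whole cone about $-i$ coincides with the whole cone about $i$, I would record
$$
\mathcal{R}_f\subset \overline{D}(0,\rho)\cup\big(\C\setminus(\mathcal{C}_{i,\phi}\cup\mathcal{C}_{-1,\phi}^{+})\big).
$$
A short computation in polar coordinates then identifies $\mathcal{C}_{i,\phi}=\{u+iv:\,|u|\le(\tan\phi)|v|\}$ and $\mathcal{C}_{-1,\phi}^{+}=\{u+iv:\,|v|\le-(\tan\phi)u\}$ (the latter already forcing $u\le 0$).

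Finally, fix any $a>\max\{1,\cot\phi\}$. I would then verify the pointwise inclusion
$$
\C\setminus\big(\{|u|\le(\tan\phi)|v|\}\cup\{|v|\le-(\tan\phi)u\}\big)\subset\{u+iv:\,|v|\le a|u|\}\setminus\{u+iv:\,|v|\le-\tfrac{1}{a}u\}.
$$
Indeed, if $|u|>(\tan\phi)|v|$ then $u\neq 0$ and $|v|<(\cot\phi)|u|\le a|u|$, giving membership in the first set; if in addition $|v|>-(\tan\phi)u$, then using $\tfrac{1}{a}\le\tan\phi$ one gets $|v|>-\tfrac{1}{a}u$ (the case $u\le 0$ is immediate from $-\tfrac{1}{a}u\le-(\tan\phi)u$, and the case $u>0$ is immediate since then $-\tfrac{1}{a}u<0\le|v|$), so $u+iv$ does not lie in $\{|v|\le-\tfrac{1}{a}u\}$. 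Combining this with the containment of $\mathcal{R}_f$ displayed above, and keeping the same $\rho$, proves the corollary.

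The argument is short; the only point requiring care is the bookkeeping of directions. One must pick the rotation so that the half cone lands about the negative $u$-axis (this is why $\theta$ corresponds to $\pi-\alpha$ rather than $-\alpha$), and one must correctly convert ``aperture $2\phi$ about the imaginary axis, resp. about the negative real axis'' into the inequalities $|v|\le a|u|$ and $|v|\le-\tfrac{1}{a}u$. There is no analytic difficulty beyond the appeal to Lemma~\ref{range-antipod}.
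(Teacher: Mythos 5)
Your argument is correct and is essentially the paper's own proof: apply Lemma~\ref{range-antipod}, choose the rotation $e^{i\theta}$ with $e^{i\theta}\xi=-1$, and translate the complement of the rotated cones into the stated inequalities by taking $a>\max\{1,\cot\phi\}$. The paper compresses this last bookkeeping into ``readily follows''; you have simply written out the same computation, with the cone identifications and the choice of $a$ handled correctly.
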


\begin{proof}
Let $\xi \in \partial \D$ be associated to $g$ as in Lemma
\ref{range-antipod} and choose $\theta \in [0, 2\pi)$ so that
$e^{i\theta} \xi = -1$. The conclusion readily follows from Lemma
\ref{range-antipod}.
\end{proof}

\section{Lewis Lemma}\label{sec:3}

Lewis' proof of Picard's theorem  is based on a technical lemma
which controls the oscillation of a harmonic function near its zeros
(see \cite{L}, where it was proved for the more general class of
Harnack functions). We state here a version for harmonic functions
in the plane (see \cite{R}, Lemma $1.3.12 $).

\begin{lemma}\label{lewis}
Let $u$ be harmonic in the disc $D(0,R)$ and continuous in the
closed disc $\overline{D}(0,R)$ such that $u(0) = 0$. Then there
exists a disc $D(z,\rho ) \subset D(0,R)$ such that
\begin{eqnarray}
 u(z) & = & 0  \label{zero} \vspace{0.2cm}\\
 M(u,0, R/2 ) & \leq & C_0 \, M(u, z, \rho ) \label{growth}\vspace{0.2cm} \\
 M(u,z,\rho ) & \leq  & C_0 \, M(u,z,\rho /2) \label{doubling}
\end{eqnarray}
for some absolute constant $C_0 >0$.
\end{lemma}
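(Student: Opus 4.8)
The plan is to extract the disc $D(z,\rho)$ by a stopping-time / iterated-bisection argument driven by the two competing requirements: we want $u$ to vanish somewhere in the disc (a geometric constraint inherited from $u(0)=0$), and we want the maximum modulus $M(u,z,\rho)$ both to dominate the global quantity $M(u,0,R/2)$ up to a constant and to be comparable to $M(u,z,\rho/2)$ (a doubling/non-degeneracy condition). First I would record the elementary monotonicity and Harnack-type facts for $M(u,z,r)=\max_{|w-z|=r}|u(w)|$ that make the search possible: $M$ is increasing in $r$, and if $u(z)=0$ then on $D(z,r)$ the oscillation of $u$ is controlled by $M(u,z,r)$, so one has a two-sided bound comparing $\max_{\overline D(z,r)}u$ and $-\min_{\overline D(z,r)}u$ with $M(u,z,r)$ — this is where the hypothesis $u(z)=0$ enters and why every disc produced by the construction is required to be centered at a zero of $u$.

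The core of the argument is a descent procedure. Starting from $z_0=0$, $\rho_0 = R/2$ (or a fixed fraction of $R$), suppose the doubling inequality \eqref{doubling} fails at the current disc, i.e. $M(u,z_k,\rho_k) > C_0\,M(u,z_k,\rho_k/2)$. Then $u$ is "large far out but small near the center," and I would use this to locate a point $z_{k+1}$ with $u(z_{k+1})=0$ inside (a definite fraction of) $D(z_k,\rho_k)$ at which the relevant maximum has not decreased too much — more precisely, at which $M(u, z_{k+1}, \rho_{k+1}) \geq c\, M(u, z_k, \rho_k)$ for a suitable $\rho_{k+1}$ comparable to $\rho_k$ and a constant $c$ depending only on $C_0$. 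The existence of such a zero $z_{k+1}$ uses that $u(z_k)=0$ together with the fact that $u$ attains a value of size $\sim M(u,z_k,\rho_k)$ on $\partial D(z_k,\rho_k)$: between a zero and a point where $|u|$ is large there must be, by continuity and the mean value / sign considerations, a whole nearby sub-disc on which $u$ still has a zero at its center but whose maximal modulus is a fixed fraction of $M(u,z_k,\rho_k)$. Iterating, the quantities $M(u,z_k,\rho_k)$ stay bounded below by a constant multiple of $M(u,0,R/2)$, while the radii $\rho_k$ shrink geometrically, so the process terminates: it cannot continue forever inside the bounded disc $D(0,R)$ unless $M(u,0,R/2)=0$, in which case $u\equiv 0$ on $D(0,R/2)$ and the statement is trivial. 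At the (first) step where \eqref{doubling} holds we stop and set $(z,\rho)=(z_{k},\rho_{k})$; by construction $u(z)=0$, which is \eqref{zero}, and the accumulated lower bounds give \eqref{growth} with $C_0 = c^{-1}$ after absorbing the finitely many geometric-series factors into one absolute constant.

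The main obstacle I anticipate is the bookkeeping that keeps all constants absolute and independent of $R$ and of $u$: one must choose the fixed fractions governing how far $z_{k+1}$ sits from $z_k$, how $\rho_{k+1}$ compares to $\rho_k$, and the threshold $C_0$, so that (i) each descent step loses only a bounded factor in $M$, (ii) $D(z_k,\rho_k)\subset D(0,R)$ is preserved throughout (which forces $\sum \rho_k$-type estimates or a comparison like $\rho_k + |z_{k+1}-z_k| < \rho_{k+1}'$ to be consistent), and (iii) the geometric decay of the $\rho_k$ guarantees termination. A secondary subtlety is the transition between the sup over a circle $M(u,z,r)$ and pointwise values of $u$ (with signs), where one invokes the maximum principle and the two-sided oscillation bound at a zero; these are standard but must be applied in the right order. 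Since the statement is quoted from \cite{R}, Lemma~1.3.12, and \cite{L}, I would at this point simply refer to those sources for the detailed constants and present the construction above as the outline, noting that the slight generalization needed later will be addressed in the next subsection.
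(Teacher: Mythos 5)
You should first note that the paper itself does not prove Lemma~\ref{lewis}: it states it as a quoted result and refers to \cite{L} and to \cite{R} (Lemma~1.3.12), so your closing decision to defer to those sources for the detailed construction is consistent with what the paper actually does. Judged as a proof sketch, however, your plan has a genuine gap, and it sits precisely at the point the paper's introduction identifies as the crux (the ``signed Harnack'' step). You assert that, when doubling fails at $D(z_k,\rho_k)$, one can find a nearby zero $z_{k+1}$ and a comparable radius $\rho_{k+1}$ with $M(u,z_{k+1},\rho_{k+1})\geq c\,M(u,z_k,\rho_k)$, justified only by ``continuity and mean value / sign considerations.'' This is exactly the hard content of the lemma and does not follow from such soft facts; note that doubling is \emph{not} automatic at a zero (for $u=\mathfrak{Re}(z^n)$ centred at $0$ the ratio $M(u,0,\rho)/M(u,0,\rho/2)=2^n$ is unbounded), which is why a careful selection of the disc is needed at all. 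As written, the plan assumes the step where all the work of \cite{L}/\cite{R} is done.

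Moreover, the bookkeeping you propose cannot yield the \emph{absolute} constant in \eqref{growth}, and the termination argument is not valid. If each descent step is allowed to lose a fixed factor $c<1$ in the maximum while the radii shrink geometrically, then after $k$ steps you only retain $M(u,z_k,\rho_k)\geq c^{k}M(u,0,R/2)$; since nothing bounds $k$ by an absolute constant (again $u=\mathfrak{Re}(z^n)$ forces descent to scales of order $R/n$, i.e.\ roughly $\log n$ halvings, before a disc with absolute doubling constant appears), the constant in \eqref{growth} degrades with $u$. The claimed contradiction ``the process cannot continue forever unless $M(u,0,R/2)=0$'' also fails: because the retained lower bound $c^{k}M(u,0,R/2)$ tends to $0$, the shrinking discs produce no contradiction, so the iteration need not stop for the reason you give. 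A correct scheme must be arranged so that the maximum does \emph{not} decrease when passing to the new zero-centred disc (or must select the disc by an extremal/maximizing criterion over all admissible zero-centred discs); then \eqref{growth} holds with a constant independent of the number of steps, and termination follows simply from the finite upper bound $\max_{\overline{D}(0,R)}u<\infty$ supplied by continuity on the closed disc. With the step producing such a non-decreasing passage to a new zero-centred disc left unproved, the proposal does not establish the lemma, and the honest course is the one the paper takes: cite \cite{L} and \cite{R}.
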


\begin{cor}\label{corolewis}
Let $u$ be a non constant harmonic function in $\C$. Then there
exists a sequence of discs $\{ D(z_n , \rho_n ) \}$ such that, for
all $n$,
\begin{eqnarray}
u(z_n ) & = & 0 \label{zerobis}\vspace{0.2cm} \\
\lim_{n\to \infty }M(u, z_n , \rho_n ) & = &  +\infty \label{growthbis}\vspace{0.2cm}\\
M(u, z_n , \rho_n ) & \leq & C \, M(u, z_n , \rho_n /2)
\label{doublingbis}\vspace{0.3cm}  \label{localbis}
\end{eqnarray}
for some absolute constant $C >0$.
\end{cor}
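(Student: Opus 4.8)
\textbf{Proof proposal for Corollary \ref{corolewis}.}

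The plan is to apply Lemma \ref{lewis} on a sequence of discs $D(0,R_k)$ with $R_k \to \infty$, after first reducing to the case $u(0)=0$. First I would note that replacing $u$ by $u - u(0)$ changes nothing in the conclusion (the maxima of oscillation $M(u,z,\rho)$ and the set of zeros of $u - u(0)$ are what matter, and a quick check of how $M$ is defined in \cite{R} shows the statement is invariant under this normalization), so assume $u(0)=0$. For each $k \in \N$ apply Lemma \ref{lewis} with $R = R_k := 2k$; this produces a disc $D(z_k,\rho_k) \subset D(0,2k)$ with $u(z_k)=0$, with $M(u,0,k) \le C_0\, M(u,z_k,\rho_k)$, and with $M(u,z_k,\rho_k) \le C_0\, M(u,z_k,\rho_k/2)$. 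Setting $z_n = z_k$ and $\rho_n = \rho_k$ along this sequence, properties \eqref{zerobis} and \eqref{doublingbis} are immediate with $C = C_0$.

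The only remaining point is \eqref{growthbis}, i.e.\ that $M(u,z_k,\rho_k) \to +\infty$. This follows from the growth estimate $M(u,0,k) \le C_0\, M(u,z_k,\rho_k)$ together with the fact that a nonconstant entire harmonic function is unbounded, hence $M(u,0,k) \to +\infty$ as $k \to \infty$ (this is exactly the harmonic Liouville theorem, invoked as in Corollary \ref{range-closed}). Dividing, $M(u,z_k,\rho_k) \ge C_0^{-1} M(u,0,k) \to +\infty$.

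I do not anticipate a genuine obstacle here; the substance is entirely in Lemma \ref{lewis}, which is quoted. The one mild subtlety worth stating carefully is the reduction to $u(0)=0$ and checking that $M(u - u(0), \cdot,\cdot)$ can be used in place of $M(u,\cdot,\cdot)$ in the statement — depending on the precise normalization of $M$ in the source, one may instead simply keep $u$ as is and apply Lemma \ref{lewis} to $u - u(0)$, noting that the zero set of $u - u(0)$ and the quantities $M(u-u(0),z,\rho)$ are precisely what appear in \eqref{zerobis}–\eqref{doublingbis} once one observes that an additive constant does not affect oscillation-type maxima. Either way the argument is a routine diagonal/exhaustion application of the lemma.
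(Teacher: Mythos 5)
Your overall strategy is exactly the paper's: apply Lemma \ref{lewis} to an exhausting sequence of discs $D(0,R_k)$ with $R_k\to\infty$, read off \eqref{zerobis} and \eqref{doublingbis} directly from the lemma, and get \eqref{growthbis} from \eqref{growth} together with $M(u,0,R)\uparrow+\infty$ for nonconstant $u$. That part is fine and coincides with the paper's argument.

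The flaw is in your normalization step. In this paper $M(u,z,\rho)$ is the maximum of $u$ itself over $\overline{D}(z,\rho)$ (this is how it is used, e.g., in Lemma \ref{abs}, where $M(u,z_0,r)-u$ is taken to be a positive harmonic function), so it is \emph{not} an oscillation and is not invariant under adding a constant; likewise \eqref{zerobis} asks for zeros of $u$, not of $u-u(0)$. Consequently, replacing $u$ by $u-u(0)$ and proving the statement for that function does not give the statement for $u$: you would obtain points with $u(z_n)=u(0)$ rather than $u(z_n)=0$, and the doubling inequality \eqref{doublingbis} for $u-u(0)$ does not transfer to $u$ (shift $M$ by a constant and the ratio $M(u,z_n,\rho_n)/M(u,z_n,\rho_n/2)$ changes, possibly without any uniform bound). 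The correct reduction, which is what ``we may assume $u(0)=0$'' means here, is a translation of the independent variable: since $u$ is a nonconstant entire harmonic function it is unbounded above and below (harmonic Liouville), hence by continuity it vanishes at some $z_0\in\C$; replacing $u(z)$ by $u(z+z_0)$ preserves every quantity in \eqref{zerobis}--\eqref{doublingbis} (the discs just get translated back by $z_0$) and puts a zero at the origin, as Lemma \ref{lewis} requires. With that one correction the rest of your argument is complete and identical to the paper's proof.
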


\begin{proof}
We may assume that $u(0) = 0$. Since $u$ is not constant then $M(u,
0, R) \uparrow +\infty$ as $R \uparrow + \infty$. Take a sequence
$R_n \to +\infty$ and apply Lemma \ref{lewis} to the discs $D(0,
R_n)$. We then get discs $D(z_n, \rho_n)$ such that (\ref{zerobis}),
(\ref{doublingbis}) and (\ref{localbis}) hold. Finally,
(\ref{growthbis}) also holds, since $M(u, 0, R_n /2 ) \leq C \, M(u,
z_n, \rho_n )$ by (\ref{growth}) and $M(u, 0, R_n /2) \to +\infty$
as $n\to \infty$.
\end{proof}

\

For further applications to harmonic maps we will need yet a
refinement of Corollary \ref{corolewis}. We include an elementary
lemma first.

\begin{lemma}\label{abs}
Let $u$ be harmonic in $D(z_0 , r)$ and continuous in
$\overline{D}(z_0, r)$ such that $u(z_0 ) = 0$. Then
$$
M(|u|, z_0 , \frac{2}{3}r) \leq 4 \, M(u, z_0 ,r)
$$
\end{lemma}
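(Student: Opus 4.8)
The plan is to read off the bound from Harnack's inequality for nonnegative harmonic functions. Write $M = M(u, z_0, r)$ for the maximum of $u$ over $\overline{D}(z_0, r)$ (equivalently, by the maximum principle, over the circle $|z - z_0| = r$). Since $u$ is harmonic with $u(z_0) = 0$, the mean value property forces $\max_{|z-z_0| = r} u(z) \ge 0$, i.e. $M \ge 0$; and if $M = 0$ then $u \equiv 0$ and there is nothing to prove, so we may assume $M > 0$.

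First I would introduce the auxiliary function $w = M - u$. Then $w$ is harmonic in $D(z_0, r)$, continuous on $\overline{D}(z_0, r)$, nonnegative on the bounding circle $|z - z_0| = r$, hence nonnegative throughout $\overline{D}(z_0, r)$ by the minimum principle, and $w(z_0) = M$. Applying Harnack's inequality to $w$ on $D(z_0, r)$ gives, for every $z$ with $|z - z_0| \le \tfrac{2}{3} r$,
$$ w(z) \le \frac{r + \tfrac{2}{3} r}{r - \tfrac{2}{3} r}\, w(z_0) = 5M , $$
so that $u(z) = M - w(z) \ge M - 5M = -4M$ on $\overline{D}(z_0, \tfrac{2}{3} r)$.

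On the other hand, the maximum principle gives $u(z) \le M \le 4M$ on the same disc (using $M \ge 0$). Combining the two estimates yields $|u(z)| \le 4M$ for all $z$ with $|z - z_0| \le \tfrac{2}{3} r$, which is exactly the asserted inequality $M(|u|, z_0, \tfrac{2}{3} r) \le 4\, M(u, z_0, r)$.

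There is no real obstacle here; the only points deserving a little care are (i) that the signed quantity $M(u, z_0, r)$ is automatically nonnegative precisely because $u$ vanishes at the center — this is what lets the one-sided bound $u \le M$ also serve as a bound for $|u|$ up to the factor $4$ — and (ii) the bookkeeping of the Harnack constant at radius $\tfrac{2}{3} r$, which produces the factor $5$ and hence the claimed $4$. (Harnack applies verbatim since $w$ is nonnegative and harmonic on $D(z_0,r)$ and continuous up to the boundary; if one only had harmonicity in the open disc, the same constant follows by applying Harnack on $\overline{D}(z_0, r')$ and letting $r' \uparrow r$.)
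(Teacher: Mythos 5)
Your proof is correct and is essentially the paper's own argument: you form $w = M - u$ with $M = M(u,z_0,r)$, apply Harnack's inequality on the disc of radius $\tfrac{2}{3}r$ to get $w \leq 5M$ there, and conclude $-4M \leq u \leq M$, hence $M(|u|, z_0, \tfrac{2}{3}r) \leq 4M$. The extra remarks (that $M \geq 0$ since $u(z_0)=0$, and the treatment of the case $M=0$) are fine and only make explicit what the paper leaves implicit.
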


\begin{proof}
Put $M = M(u, z_0 , r)$ and let $v = M-u$. Then $v$ is harmonic and
positive in $D(z_0, r)$. Now by Harnack's inequality,
$$
M(v, z_0 , \frac{2}{3}r) \leq 5 v(z_0 ) = 5M
$$
implying that $-4M \leq u \leq M$ in $\displaystyle D(z_0 ,
\frac{2}{3}r)$, so $\displaystyle M(|u|, z_0 , \frac{2}{3}r ) \leq
4M$.

\end{proof}

\

\begin{cor}\label{corolewis2}
Let $u$ be a nonconstant harmonic function in $\C$. Then there
exists a sequence of discs $D(z_n ,r_n)$ such that
\begin{eqnarray}
u(z_n ) & = & 0 \label{zero3}\vspace{0.2cm} \\
\lim_{n\to \infty }M(u, z_n , r_n ) & = &  +\infty \label{growth3}\vspace{0.2cm}\\
M(|u|, z_n , r_n ) & \leq & C_0 \, M(u, z_n , \frac{3}{4}\, r_n )
\label{doubling3}
\end{eqnarray}
for some absolute constant $C_0 >0$.
\end{cor}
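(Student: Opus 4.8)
The plan is to deduce Corollary \ref{corolewis2} from Corollary \ref{corolewis} by combining the sequence of discs produced there with the elementary estimate of Lemma \ref{abs}. Starting from Corollary \ref{corolewis}, we obtain a sequence of discs $D(z_n, \rho_n)$ with $u(z_n) = 0$, $M(u, z_n, \rho_n) \to +\infty$, and the doubling bound $M(u, z_n, \rho_n) \leq C\, M(u, z_n, \rho_n/2)$. The first idea is simply to set $r_n = \rho_n$; then \eqref{zero3} and \eqref{growth3} are immediate from \eqref{zerobis} and \eqref{growthbis}. It remains to upgrade the one-sided doubling inequality \eqref{doublingbis} to the two-sided statement \eqref{doubling3} involving $M(|u|,\cdot,\cdot)$.

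For the doubling estimate, I would apply Lemma \ref{abs} on the disc $D(z_n, r_n)$ with radius scaled appropriately. Specifically, choosing the radius so that $\tfrac{2}{3}$ of it equals $r_n$ is awkward since $r_n$ is the outer radius in \eqref{doubling3}; instead I would run Lemma \ref{abs} on a slightly larger disc, or more cleanly, rescale so that Lemma \ref{abs} is applied on $D(z_n, \tfrac{9}{8} r_n)$, giving $M(|u|, z_n, \tfrac{3}{4}\cdot\tfrac{9}{8} r_n)\cdot$-type control. Let me instead arrange it directly: apply Corollary \ref{corolewis} not to $u$ but with the understanding that we may shrink radii by a fixed factor, so that the disc on which the doubling holds sits strictly inside the plane and Lemma \ref{abs} applies. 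Concretely, replace $\rho_n$ by $r_n := \tfrac{2}{3}\rho_n$ (or whatever fixed fraction makes the arithmetic land on $\tfrac34$); then Lemma \ref{abs}, applied on $D(z_n, \rho_n)$ with $z_0 = z_n$, yields $M(|u|, z_n, \tfrac{2}{3}\rho_n) \leq 4\, M(u, z_n, \rho_n)$. Chaining this with the doubling inequality \eqref{doublingbis} and the monotonicity of $M(u,z_n,\cdot)$ (so that $M(u,z_n,\rho_n/2)\le M(u,z_n,\tfrac34 r_n)$ once the radii are reconciled), and noting $M(u,z_n,\cdot)\le M(|u|,z_n,\cdot)$, gives \eqref{doubling3} with a new absolute constant $C_0$, while \eqref{growth3} survives because replacing $\rho_n$ by a fixed fraction of itself only changes $M(u,z_n,\cdot)$ by a Harnack-type bounded factor and still tends to infinity.

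The only genuine bookkeeping obstacle is matching the numerical fractions: Lemma \ref{abs} naturally produces the fraction $\tfrac23$ (from Harnack on a disc), whereas \eqref{doubling3} asks for $\tfrac34$ on the inner radius; since all constants are absolute, this is harmless — one simply applies Lemma \ref{abs} on a disc of radius $\tfrac98 r_n$ (legitimate since $u$ is entire, so there is no boundary constraint), obtaining $M(|u|, z_n, \tfrac34 r_n) \le 4\, M(u, z_n, \tfrac98 r_n)$, and then controls $M(u, z_n, \tfrac98 r_n)$ by $M(u, z_n, \tfrac34 r_n')$ for a comparably-sized radius $r_n'$ via \eqref{doublingbis} and Harnack, relabeling at the end. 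I expect the write-up to be four or five lines; the main point to be careful about is not to lose \eqref{growth3} when passing to the smaller radius, which is guaranteed because $M(u, z_n, \cdot)$ at two radii of fixed ratio are comparable by Harnack's inequality for the positive harmonic function $M(u,z_n,R) - u$.
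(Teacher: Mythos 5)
Your proposal is essentially the paper's proof: take the discs $D(z_n,\rho_n)$ from Corollary \ref{corolewis}, set $r_n=\tfrac23\rho_n$ so that $\tfrac34 r_n=\tfrac12\rho_n$, apply Lemma \ref{abs} on $D(z_n,\rho_n)$ and chain it with \eqref{doublingbis} to get \eqref{doubling3}; the detour through a dilated disc of radius $\tfrac98 r_n$ is unnecessary once the radii are chosen this way. One caveat: your justification of \eqref{growth3}, namely that $M(u,z_n,\cdot)$ at two radii of fixed ratio are comparable by Harnack applied to $M(u,z_n,R)-u$, is false as a general principle (for $u=\Re{z^n}$ the ratio $M(u,0,R/2)/M(u,0,R)=2^{-n}$ degrades with $n$); the correct reason, which is what the paper uses, is that $M(u,z_n,r_n)\geq M(u,z_n,\rho_n/2)\geq C^{-1}M(u,z_n,\rho_n)\to\infty$ by monotonicity of the maximum together with the doubling bound \eqref{doublingbis} and \eqref{growthbis}.
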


\begin{proof}
Let $\{ D(z_n , \rho_n ) \}$ be the sequence of discs provided by
Corollary \ref{corolewis} and set $r_n = \frac{2}{3}\, \rho_n$. Then
(\ref{zero3}) is automatic.  By (\ref{doublingbis}) and Lemma
\ref{abs},
$$
M(|u|, z_n, r_n ) = M(|u|, z_n , \frac{2}{3} \rho_n ) \leq 4 \, M(u,
z_n, \rho_n) \leq 4C \, M(u, z_n, \frac{3}{4}\, r_n)
$$
which proves (\ref{doubling3}).  (\ref{growth3}) is consequence of
(\ref{growthbis}) and the fact that $\displaystyle M(u, z_n , r_n )
= M(u, z_n, \frac{2}{3} \rho_n ) \geq M(u,z_n, \rho_n)$.
\end{proof}

\section{The rescaling method and consequences}\label{sec:4}

Let $f= u+iv: \C \to \C$ be a harmonic map such that
$\mathcal{D}(\mathcal{R}_f )$ contains no pair of antipodal points.
After a rotation we may assume, according to Corollary
(\ref{cones-norm}), that there exist $a>1$ and $\rho >0$ such that
\begin{equation}\label{range}
\mathcal{R}_f \subset \overline{D}(0, \rho) \cup \big ( \{ u+iv :
|v| \leq a|u|\} \setminus \{u+iv :  |v| \leq -\frac{1}{a} u  \} \big
)
\end{equation}
Let $\alpha = \arctan (1/a) \in (0, \pi /4) $. Then it follows from
(\ref{range}) and the definition of asymptotic directions that
\begin{equation}\label{arc1}
\mathcal{D}(\mathcal{R}_f) \subset I_{\alpha}
\end{equation}
where

\begin{equation}\label{arc2}
I_{\alpha } = \{ e^{i\theta}: \theta \in [-\frac{\pi}{2} +\alpha ,
\frac{\pi}{2} -\alpha] \cup [\frac{\pi}{2} +\alpha , \pi -\alpha]
\cup [\pi + \alpha , \frac{3\pi}{2} -\alpha] \}
\end{equation}


 Assume that $f$ is not constant. Then both $u$ and $v$ must
be not constant because otherwise $\mathcal{D}(\mathcal{R}_f)$ would
contain antipodal points, according to Proposition (\ref{examp}).
Now, starting from $u$, let $\{ D(z_n,r_n) \}$ be a sequence of
discs as in Corollary (\ref{corolewis2}) and define the following
two sequences of harmonic functions in $\D$:

$$
u_n (z) = \frac{u(z_n + r_n z )}{M_n} \, , \, \, \, \, \, \, v_n (z)
= \frac{v(z_n +r_n z)}{M_n}
$$
where $M_n = M(|u|, z_n ,r_n )$. From Corollary (\ref{corolewis2})
it follows that
\begin{eqnarray}
u_n (0) & = & 0  \label{zero-norm}\vspace{0.2cm} \\
|u_n| & \leq  & 1 \label{bound-norm} \vspace{0.2cm} \\
M(u_n,0,3/4) & \geq & C_0^{-1}
>0 \label{doubling-norm}
\end{eqnarray}
Also, we get from (\ref{range}) that $|v_n | \leq \max (a,L)$, where
$$
L = \sup_n \frac{M(|v|, 0, \rho )}{M_n}
$$
Observe that $L < \infty$, since $M_n \to +\infty$ as $n\to \infty$.
Then both $\{ u_n \}$ and $\{ v_n \}$ are uniformly bounded
sequences of harmonic functions in $\D$. By Harnack's theorem, there
exists a subsequence $\displaystyle \{ n_k \}$ and harmonic
functions $U$ and $V$ in $\D$ such that
$$
u_{n_k} \to U \, , \, \, \, \, \, \, v_{n_k} \to V
$$
uniformly in compact sets of $\D$. Note that, from
(\ref{zero-norm}), (\ref{bound-norm}) and (\ref{doubling-norm}) it
follows that
\begin{eqnarray}
U(0) & = & 0  \label{zero-norm-lim}\vspace{0.2cm} \\
|U| & \leq  & 1 \label{bound-norm-lim} \vspace{0.2cm} \\
M(U,0,3/4) & \geq & C_0^{-1}
>0 \label{doubling-norm-lim}
\end{eqnarray}
In particular $U$ is nonconstant.

\

\begin{defn}\label{def F}
If $U$ and $V$ are as above, we will call $F = U + iV : \D \to \C$
the \textit{rescaled} harmonic map associated to the original
harmonic map $f = u+iv$. We will denote $\mathcal{R}_F = F( \D )$
the range of $F$.
\end{defn}

The following elementary proposition shows how $\mathcal{R}_F$ is
related to $\mathcal{D}_f$, the set of asymptotic directions of the
original harmonic map $f = u+iv$.

\begin{prop}\label{range-asymp}
If $F = U + iV$ is as above then
$$
\mathcal{R}_F \subset \{re^{i\theta}: r\geq 0 \, , \, e^{i\theta}
\in \mathcal{R}(\mathcal{D}_f) \}
$$
\end{prop}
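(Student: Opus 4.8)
The plan is to unwind the definition of $F$ at a single point of its range. First I would fix an arbitrary $w \in \mathcal{R}_F$ and write $w = F(z_0) = U(z_0) + iV(z_0)$ for some $z_0 \in \D$. Since $U$ and $V$ are the locally uniform limits of $u_{n_k}$ and $v_{n_k}$, evaluating at $z_0$ gives $u_{n_k}(z_0) \to U(z_0)$ and $v_{n_k}(z_0) \to V(z_0)$, and by the very definition $u_n(z) = u(z_n + r_n z)/M_n$, $v_n(z) = v(z_n + r_n z)/M_n$ this reads
\[
\frac{1}{M_{n_k}}\, f\big(z_{n_k} + r_{n_k} z_0\big) \;=\; u_{n_k}(z_0) + i\, v_{n_k}(z_0) \;\longrightarrow\; w .
\]
Setting $w_k = f(z_{n_k} + r_{n_k} z_0) \in \mathcal{R}_f$ and $\varepsilon_k = 1/M_{n_k}$, we have $\varepsilon_k \to 0$ (recall $M_n \to +\infty$, which comes from (\ref{growth3}) in Corollary \ref{corolewis2}) and $\varepsilon_k w_k \to w$.

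Next I would split into two cases. If $w = 0$, then $w$ lies in $\{re^{i\theta}: r \geq 0,\ e^{i\theta} \in \mathcal{D}(\mathcal{R}_f)\}$ simply by taking $r = 0$, provided this set is nonempty; and $\mathcal{D}(\mathcal{R}_f) \neq \varnothing$ because $f$ is nonconstant, so $\mathcal{R}_f$ is unbounded by the harmonic Liouville theorem and part b) of Proposition \ref{closed} applies. If $w \neq 0$, taking moduli in $\varepsilon_k w_k \to w$ gives $\varepsilon_k |w_k| \to |w| > 0$, hence $|w_k| = (\varepsilon_k|w_k|)/\varepsilon_k \to +\infty$, and moreover
\[
\frac{w_k}{|w_k|} \;=\; \frac{\varepsilon_k w_k}{\varepsilon_k |w_k|} \;\longrightarrow\; \frac{w}{|w|} .
\]
By Proposition \ref{asym-char} this forces $w/|w| \in \mathcal{D}(\mathcal{R}_f)$, so $w = |w|\cdot(w/|w|)$ belongs to the claimed set. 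Since $w$ was arbitrary, the inclusion follows.

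I do not expect a serious obstacle here: the statement is essentially a bookkeeping consequence of Proposition \ref{asym-char} together with the growth $M_{n_k} \to +\infty$ supplied by Corollary \ref{corolewis2}. The only subtlety worth isolating is the degenerate case $w = 0$, where one must invoke the nonconstancy of $f$ — already established in the construction preceding Definition \ref{def F} — to guarantee that $\mathcal{D}(\mathcal{R}_f)$ is nonempty, and hence that the origin does lie in the target set on the right-hand side.
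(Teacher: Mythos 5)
Your argument is correct and is essentially the paper's own proof: unwind the definition of $F$ at a point $z_0\in\D$, pass to the sequence $w_k=f(z_{n_k}+r_{n_k}z_0)\in\mathcal{R}_f$ with $M_{n_k}\to+\infty$, and apply Proposition \ref{asym-char} after separating the case $F(z_0)=0$. If anything, you treat the degenerate case and the deduction $|w_k|\to\infty$ (which genuinely requires $F(z_0)\neq 0$) a bit more carefully than the paper does, but the route is the same.
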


\begin{proof}
Let $z\in \D$. By definition of $U$ and $V$ there exists a sequence
of complex numbers $\displaystyle \{ z_k \} \subset \C$ and two
sequences of positive numbers $\{ r_k \} $, $\{ M_k \}$ with $M_k
\to +\infty$ as $k\to \infty$ such that if $\displaystyle w_k =
u(z_k +r_k z) + i v(z_k +r_k z)$, then
$$
\lim_{k\to \infty} \frac{w_k}{M_k} = U(z) +iV(z)
$$
so, in particular $|w_k| \to \infty$ as $k\to \infty$. If
$U(z)+iV(z) = 0$ then there is nothing to prove. If $U(z) + iV(z) =
R e^{i\beta} \neq 0$ then we get
$$
\lim_{k\to \infty}\frac{w_k}{|w_k |} = e^{i\beta}
$$
Since $w_k \in \mathcal{R}_f$, the conclusion follows from
Proposition (\ref{asym-char}).
\end{proof}

According to (\ref{arc1}), (\ref{arc2}) we obtain the following
consequence.
\begin{cor}\label{range-asymp2}
If $ F = U +iV$ is as above, then
\begin{equation}\label{range-arcs}
\mathcal{R}_F \subset \{ re^{i\theta}: r\geq 0 \, ,  \, e^{i\theta}
\in I_{\alpha} \}
\end{equation}
where $I_{\alpha}$
 is as in (\ref{arc2}). In particular,
\begin{equation}\label{zeros}
\{ U = 0 \} \subset \{V = 0 \} \subset \{U \geq 0 \}
\end{equation}

\end{cor}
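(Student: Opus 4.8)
The first inclusion, (\ref{range-arcs}), requires essentially no work: it is just the concatenation of two facts already in hand. Proposition \ref{range-asymp} places $\mathcal{R}_F$ inside the closed cone $\{re^{i\theta}: r\geq 0,\ e^{i\theta}\in\mathcal{D}(\mathcal{R}_f)\}$ spanned by the asymptotic directions of $f$, while (\ref{arc1}) asserts $\mathcal{D}(\mathcal{R}_f)\subset I_\alpha$. Substituting the latter into the former yields (\ref{range-arcs}) at once. So the only thing to actually prove is the set of inclusions (\ref{zeros}), and the plan is to derive them from (\ref{range-arcs}) by inspecting which unit vectors are \emph{missing} from $I_\alpha$.

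Concretely, write $I_\alpha=\{e^{i\theta}:\theta\in\Theta_\alpha\}$ with $\Theta_\alpha=[-\tfrac{\pi}{2}+\alpha,\tfrac{\pi}{2}-\alpha]\cup[\tfrac{\pi}{2}+\alpha,\pi-\alpha]\cup[\pi+\alpha,\tfrac{3\pi}{2}-\alpha]$. A trivial endpoint check, using only $0<\alpha<\pi/4$, shows that none of the angles $\tfrac{\pi}{2}$, $\pi$, $\tfrac{3\pi}{2}$ lies in $\Theta_\alpha$; hence the cone appearing in (\ref{range-arcs}) contains no nonzero multiple of $i$, of $-i$, or of $-1$. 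With this in place, fix $z\in\D$. If $U(z)=0$ then $F(z)=iV(z)$; were $V(z)\neq0$, this would be a nonzero point of $\mathcal{R}_F$ of argument $\pm\pi/2$, contradicting (\ref{range-arcs}), so $V(z)=0$. This proves $\{U=0\}\subset\{V=0\}$. If instead $V(z)=0$ then $F(z)=U(z)$ is real; were $U(z)<0$, this would be a nonzero point of $\mathcal{R}_F$ of argument $\pi$, again contradicting (\ref{range-arcs}), so $U(z)\geq0$. This proves $\{V=0\}\subset\{U\geq0\}$, completing (\ref{zeros}).

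I do not expect any genuine obstacle: all the substance is already carried by the construction of the rescaled map $F$, by Proposition \ref{range-asymp}, and by the explicit description (\ref{arc1})--(\ref{arc2}) of $I_\alpha$ extracted from (\ref{range}). The single point that needs care is that the three arcs defining $I_\alpha$ are closed, so the argument really uses $\alpha>0$ strictly (this is exactly what keeps $\tfrac{\pi}{2}$, $\pi$, $\tfrac{3\pi}{2}$ off the endpoints) as well as $\alpha<\pi/4$ (this makes the middle arc $[\tfrac{\pi}{2}+\alpha,\pi-\alpha]$ nonempty and keeps the three arcs separated). Everything else is bookkeeping, and (\ref{zeros}) is precisely the packaging of this information that will be fed into the proofs of Theorems \ref{thm1}, \ref{thm2} and \ref{thm3}.
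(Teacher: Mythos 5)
Your proposal is correct and follows essentially the same route as the paper, which presents the corollary as an immediate consequence of Proposition \ref{range-asymp} together with (\ref{arc1})--(\ref{arc2}); your explicit endpoint check that $\pi/2$, $\pi$, $3\pi/2$ are excluded from $I_\alpha$ (using $0<\alpha<\pi/4$) just spells out the step the paper leaves implicit in deducing (\ref{zeros}).
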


\section{Proof of Theorem \ref{thm1}}\label{sec:5}

We assume from the beginning that  $f = u+iv: \C \to \C$ is a
nonconstant harmonic map (or, equivalently,
$\mathcal{D}(\mathcal{R}_f) \neq \varnothing$ by Proposition
\ref{closed}). The aim of this section is to derive a contradiction
from the further assumption that $\mathcal{D}(\mathcal{R}_f)$ does
not contain any pair of antipodal points. Such a contradiction would
prove Theorem \ref{thm1}. According to section $4$, we may assume
that the rescaled harmonic map $F = U +iV : \D \to \C$ associated to
$f$ as in Definition \ref{def F} satisfies (\ref{range-arcs}) and
(\ref{zeros}).

\

Before starting the proof of Theorem \ref{thm1}, some remarks about
the local structure of the zero set of a harmonic function in the
plane are in order. Suppose that $U$ is a (nonconstant) harmonic
function defined in a neighborhood of the origin in the complex
plane such that $U(0) = 0$. It follows by elementary complex
analysis that there exist $r>0$, an integer $n\geq 1$ (the
multiplicity of the zero) and a conformal map $\phi$ in $D(0,r)$
such that $U(z) = \mathfrak{Re} (\phi (z))^n $ for $z\in D(0,r)$.
This shows in particular that the set
$$
 \{ U = 0 \} \cap D(0,r)
$$
consists of a union of $n$ analytic curves intersecting at the
origin at angle $\displaystyle 2\pi /n$. The complement (in $D(0,r)$
) of such curves is a ``petal-like" region consisting of $2n$
curvilinear sectors meeting at angle $\displaystyle 2\pi /n$ at the
origin such that the sign of $U$ successively alternates in those
sectors. The canonical model is, of course, the function $U =
\mathfrak{Re} (z^n )$.

\

The following lemma is a sort of ``cleaning" result saying that the
inclusion (\ref{range-arcs}) can be locally improved.


\begin{lemma}\label{clean lem} Let $F = U +iV: \D \to \C$ be the rescaled harmonic
map associated to $f = u+iv$, satisfying (\ref{range-arcs}) and
(\ref{zeros}). Then there exists $r>0$ such that
\begin{equation}\label{zeros=}
\{U = 0 \} \cap D(0,r) = \{V = 0 \} \cap D(0,r)
\end{equation}
Furthermore, either $UV \geq 0$ or $UV \leq 0$ in $D(0,r)$. In
particular, either
\begin{equation}\label{clean1}
F(D(0,r)) \subset \{re^{i\theta}: r\geq 0 \, , \, \theta \in [0,
\frac{\pi}{2} - \alpha ] \cup [\pi + \alpha , \frac{3\pi}{2} -
\alpha ] \}
\end{equation}
or
\begin{equation}\label{clean2}
F(D(0,r)) \subset \{ re^{i\theta}: r\geq 0 \, , \, \theta \in [-
\frac{\pi}{2}+ \alpha , 0] \cup [\frac{\pi}{2} + \alpha , \pi -
\alpha ] \}
\end{equation}
for some $\displaystyle 0 < \alpha < \pi /4$.
\end{lemma}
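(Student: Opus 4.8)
\medskip

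The plan is to reduce, near the origin, to a one-variable normal form in which $U$ becomes $\mathfrak{Re}(w)$, and then to exploit the asymmetry of $I_\alpha$ through the fact that the nodal set of a harmonic function cannot be confined to a half-plane unless the zero is simple. First I record the basic facts: $U(0)=0$ by \eqref{zero-norm-lim}, hence $V(0)=0$ by \eqref{zeros}; $U$ is nonconstant by \eqref{doubling-norm-lim}; and $V$ is nonconstant as well, since otherwise $V\equiv 0$, while $U$, being nonconstant with $U(0)=0$, must take a negative value $t$ by the minimum principle, and then $t\in\mathcal{R}_F$ would be a negative real number, whereas $\{re^{i\theta}:\theta\in I_\alpha\}$ contains no negative reals ($\pi\notin I_\alpha$), contradicting \eqref{range-arcs}. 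By the local description of nodal sets recalled before the lemma, after precomposing $F$ with a conformal map fixing the origin — an operation changing neither $\mathcal{R}_F$ nor the inclusions \eqref{zeros} — we may assume $U(z)=\mathfrak{Re}(z^n)$ on a disc $D(0,r_0)$, where $n\ge 1$ is the multiplicity of the zero of $U$; then $\{U=0\}\cap D(0,r_0)$ is exactly the union of the $2n$ rays $\arg z=\tfrac{(2k+1)\pi}{2n}$.

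Next I would show that $V$ factors through $z\mapsto z^n$. Write $V=\mathfrak{Re}\,g$ with $g$ holomorphic on $D(0,r_0)$, $g(0)=0$, $g(z)=\sum_{j\ge 1}c_jz^j$. Since $\{U=0\}\subset\{V=0\}$, along each of the $2n$ rays above the power series $t\mapsto V(te^{i\theta_\ast})=\sum_j t^j\,\mathfrak{Re}(c_je^{ij\theta_\ast})$ vanishes identically, whence $\mathfrak{Re}(c_je^{ij\theta_\ast})=0$ for every $j$ and every such $\theta_\ast$; comparing two consecutive rays (whose arguments differ by $\pi/n$) forces $j/n\in\Z$ whenever $c_j\ne 0$, so $c_j=0$ for $n\nmid j$. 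Hence $g(z)=h(z^n)$ for some holomorphic $h$ with $h(0)=0$ and $h\not\equiv 0$ (as $V$ is nonconstant), and $V(z)=\mathfrak{Re}\,h(z^n)$.

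Now substitute $w=z^n$ and consider $\widehat{F}=(W,\widehat{V})=(\mathfrak{Re}\,w,\ \mathfrak{Re}\,h(w))$ on $D(0,r_0^n)$. Its value set equals $F(D(0,r_0))\subset\mathcal{R}_F$, so $\widehat{F}$ still satisfies \eqref{range-arcs}, and transporting \eqref{zeros} gives $\{W=0\}\subset\{\widehat{V}=0\}\subset\{W\ge 0\}$. The crucial point is that $h$ has a \emph{simple} zero at $0$: if $h$ vanished to order $m\ge 2$, then near $0$ the set $\{\widehat{V}=0\}$ would consist of $m$ analytic curves through $0$ with equally spaced tangent lines; one of them must carry the segment $i\R\cap D(0,r_0^n)\subset\{\widehat{V}=0\}$ and so be tangent to $i\R$, hence another would be tangent to a line through $0$ distinct from $i\R$ and would therefore contain points with $\mathfrak{Re}\,w<0$ — impossible since $\{\widehat{V}=0\}\subset\{W\ge 0\}$. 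Thus $h(w)=cw+O(w^2)$ with $c\ne 0$, and $c\in\R$ because $\mathfrak{Re}\,h$ vanishes on $i\R$. Consequently the single nodal curve $\{\widehat{V}=0\}$ contains a segment of $i\R$ and hence equals $i\R$ near $0$; transporting back yields $\{U=0\}\cap D(0,r)=\{V=0\}\cap D(0,r)$ for some $r>0$, which is \eqref{zeros=}. Moreover, from $\widehat{V}=cW+O(|w|^2)$, checking the sign of $\widehat{V}$ along the positive real axis shows that $W\widehat{V}$ has constant sign near $0$ — nonnegative if $c>0$, nonpositive if $c<0$ — and therefore $UV$ has a constant sign on $D(0,r)$ after shrinking $r$.

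To finish: on this disc $F(D(0,r))$ lies both in $\mathcal{R}_F\subset\{re^{i\theta}:\theta\in I_\alpha\}$ and in $\{u+iv:\,uv\ge 0\}$ (resp.\ $\{u+iv:\,uv\le 0\}$); intersecting $I_\alpha$ with the directions of the closed first-and-third quadrants (resp.\ second-and-fourth) leaves precisely the arcs in \eqref{clean1} (resp.\ \eqref{clean2}), which completes the proof. I expect the step ruling out $m\ge 2$ — equivalently, excluding spurious branches of $\{V=0\}$ lying off $\{U=0\}$ — to be the main obstacle, since this is exactly where the forbidden negative-real direction in $I_\alpha$ is used; the remaining steps are bookkeeping with the power series of $g$ and with the local structure of nodal sets.
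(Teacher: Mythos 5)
Your proof is correct, but it reaches the conclusion by a different technical route than the paper. The paper works directly with the two families of nodal curves: it writes $\{U=0\}\cap D(0,r)$ as $n$ analytic curves and $\{V=0\}\cap D(0,r)$ as $m$ curves, uses the first inclusion in \eqref{zeros} to see that every $U$-curve is a $V$-curve (so $n\le m$), and rules out $n<m$ because an extra $V$-branch would then lie inside a curvilinear sector where $U<0$, contradicting $\{V=0\}\subset\{U\ge 0\}$; the constancy of the sign of $UV$ is then read off from the alternating sectors of the common zero set. You instead normalize $U=\mathfrak{Re}(z^n)$ by a conformal change of variable, prove the factorization $V=\mathfrak{Re}\,h(z^n)$ by the ray-vanishing/power-series computation, and only then invoke the same geometric obstruction (a nodal branch of $\widehat V$ tangent to a direction in the open left half-plane) to force $h$ to have a simple zero with real leading coefficient. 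The underlying mechanism -- \eqref{zeros} plus the local structure of nodal sets pushing a spurious $V$-branch into $\{U<0\}$ -- is the same, but your factorization step is genuinely extra structure: it buys the first-order relation $\widehat V=cW+O(|w|^2)$, i.e. a local comparison of $V$ with a real multiple of $U$ in the normalized variable, which is stronger than what the lemma asserts, at the cost of the series computation and the (routine, and worth stating explicitly) transfer of \eqref{zeros=} and the sign conclusion back through the conformal map. Two small expository points: the range after precomposition is only a subset of $\mathcal{R}_F$, which is all you need for \eqref{range-arcs}; and in fixing the sign of $W\widehat V$ you should also note that $\widehat V$ has constant sign on each of the two half-discs cut out by its zero set $i\R$ (or check the negative real axis as well), which is immediate from \eqref{zeros=} but is the step that actually yields the dichotomy leading to \eqref{clean1} or \eqref{clean2}.
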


\begin{proof}
Remind that $U$ is not constant and that $U(0) = V(0) = 0$. Then $V$
cannot be constant sine otherwise  $V \equiv 0$, which would imply
$U\geq 0$ by (\ref{zeros}) and therefore $U \equiv 0$ by the Minimum
principle.

\

Choose $r>0$ so that \vspace{-0.2cm}
\begin{eqnarray*}
\{ U = 0 \} \cap D(0,r) & = & \bigcup_{k=1}^n \gamma_k
\vspace{0.2cm} \\
\{ V = 0 \} \cap D(0,r) & = & \bigcup_{j=1}^m \Gamma_j
\end{eqnarray*}
where the $\gamma_k$'s and the $\Gamma_j$'s are analytic curves
meeting at the origin at angles $\displaystyle 2\pi /n$ and $2\pi
/m$, respectively.  Observe that, by (\ref{zeros}), necessarily $n
\leq m$ and each $\gamma_k $ is one of the $\Gamma_j$'s. We claim
that $n = m$ and that both families of curves actually coincide, so
(\ref{zeros=}) follows. The fact that $UV$ has constant sign in
$D(0,r)$ would then be a direct consequence of the local structure
of the (common) zero set of $U$ and $V$ in $D(0,r)$.

To check the claim, suppose that $n<m$. By the above remarks on the
local structure of the zero set, there would be a $j$ such that
$\Gamma_j \setminus \{ 0 \}$ is contained in one of the curvilinear
sectors where $U < 0$, which contradicts (\ref{zeros}). Therefore
$n=m$ and we can assume that $\gamma_k = \Gamma_k$ for $k= 1,
\cdots, n$.
\end{proof}

\

\noindent \textit{Proof of Theorem \ref{thm1}}:

\vspace{0.2cm}

The proof consists of an iterative argument which will result in a
contradiction with the fact that $\mathcal{D}(\mathcal{R}_f)$ does
not contain pairs of antipodal points. Let $F = U +iV$ and $r_1 = r$
be as in Lemma \ref{clean lem}. Assume that $UV \geq 0$ in $D(0, r_1
)$, so (\ref{clean1}) holds. Define \vspace{-0.15cm}

\begin{align*}
\beta_1^{-} & =  \, \inf \{\theta \in (\pi, \frac{3\pi}{2}): \,
\exists R>0 \, \text{such that} \, \, Re^{i\theta} \in F(D(0,r_1))
\}  \vspace{0.2cm}\\
\beta_1^{+} & =  \, \sup \{\theta \in (\pi, \frac{3\pi}{2}): \,
\exists R>0 \, \text{such that} \, \, Re^{i\theta} \in F(D(0,r_1))
\}
\end{align*}
Note that $\displaystyle e^{i\beta_1^{-}}$, $\displaystyle
e^{i\beta_1^{+}} \in \mathcal{D}(\mathcal{R}_f)$ by propositions
\ref{range-asymp} and \ref{closed}. We claim that
\begin{equation}\label{betas}
\pi < \beta_1^{-} < \beta_1^{+} < 3\pi /2
\end{equation}

Observe that the first and third inequalities are consequence of
(\ref{clean1}) and that $\displaystyle \beta_1^{-} \leq \beta_1^{+}$
by definition. If $\beta_1^{-} = \beta_1^{+} = \beta$ then the
intersection of $\displaystyle F(D(0,r_1 ))$ with the third quadrant
would be contained in a line $V = a U$, where $a = \tan \beta >0$.
Then $\displaystyle \{ U < 0 \} \subset \{ V -aU = 0 \} $ in
$D(0,r_1)$ and, by unique continuation and the fact that $U$ takes
negative values near the origin, we would deduce that $V = aU$ in
$\D$ implying that $\mathcal{R}_F = F(\D)$ is a nontrivial line
segment with the origin at its interior. Again by Proposition
\ref{range-asymp}, that would imply that $\displaystyle
\mathcal{D}(\mathcal{R}_f)$ contains the antipodal points
$\displaystyle \pm e^{i\beta}$, which is a contradiction. This
proves (\ref{betas}).

\

Let $\displaystyle a_1^{-} = \tan \beta_1 ^{-}$ and $\displaystyle
a_1^{+} = \tan \beta_1^{+}$. Observe that, by (\ref{zeros=}),
\begin{equation}\label{V-aU 1}
 \{ U = 0 \} \subset \{V -a_1^{-}\, U = 0 \} \cap \{ V
-a_1^{+}\, U = 0 \}
\end{equation}
in $D(0,r_1)$. By the definition of $\displaystyle \beta_1^{-}$ and
$\displaystyle \beta_1^{+}$, we also have (again in $D(0,r_1)$)
that:
\begin{equation}\label{V-aU 2}
\{ V -a_1^{-} U < 0 \} \cap \{ U < 0 \} = \{ V -a_1^{+}U >0 \} \cap
\{U <0 \} = \varnothing
\end{equation}

\

The next step is another ``cleaning" argument applied to the pairs
$U, V - a_1^{-}\, U$ and $U, V -a_1^{+}\, U$. By imitating the proof
of Lemma \ref{clean lem}, it follows from (\ref{V-aU 1}) and
(\ref{V-aU 2}) that we may choose $ 0 < r_2 \leq r_1$ so that
\begin{equation}\label{V-aU 3}
\{ U = 0 \} = \{ V = 0 \} = \{ V - a_1^{-}\, U = 0 \} = \{
V-a_1^{+}\, U = 0 \}
\end{equation}
in $D(0,r_2 )$. Furthermore, $U(V-a_1^{-}\, U)$ and $U(V -a_1^{+}\,
U)$ must have constant signs in $D(0,r_2 )$, which by inspection
turn out to be  $U(V -a_1^{-}\, U ) \leq 0$ and $U(V -a_1^{+}\, U )
\geq 0$. In particular,
\begin{equation}\label{iter1}
F(D(0,r_2)) \subset \{U +iV : (V-a_1^{-}\, U)(V-a_1^{+}\, U) \leq 0
\}
\end{equation}
Since $\displaystyle e^{i\beta_1^{-}}$, $\displaystyle
e^{i\beta_1^{+}} \in \mathcal{D}(\mathcal{R}_f) $ and $\displaystyle
\mathcal{D}(\mathcal{R}_f) $ does not contain pairs of antipodal
points, it follows that $\displaystyle e^{i(\beta_1^{-} - \pi )}$,
$\displaystyle e^{i (\beta_1^{+} - \pi )} \notin
\mathcal{D}(\mathcal{R}_f)$.

\

Now we are ready to run the next step in the iterative argument.
Define
\begin{align*}
\beta_2^{-} & =  \, \sup \{\theta \in (0, \frac{\pi}{2}): \, \exists
R>0 \, \text{such that} \, \, Re^{i\theta} \in F(D(0,r_2))
\}  \vspace{0.2cm}\\
\beta_2^{+} & =  \, \inf \{\theta \in (0, \frac{\pi}{2}): \, \exists
R>0 \, \text{such that} \, \, Re^{i\theta} \in F(D(0,r_2)) \}
\end{align*}
Note that, analogously, $\displaystyle e^{i\beta_2^{-}}$,
$\displaystyle e^{i\beta_2^{+}} \in \mathcal{D}(\mathcal{R}_f)$ and,
since $\displaystyle e^{i(\beta_1^{-} - \pi )}$ and $\displaystyle
e^{i (\beta_1^{+} - \pi )}$ do not belong to the closed set
$\mathcal{D}(\mathcal{R}_f)$ then we get
\begin{equation}\label{iter2}
0 < \beta_1^{-} - \pi < \beta_2^{-} < \beta_2^{+} < \beta_1^{+} -\pi
< \frac{\pi}{2}
\end{equation}
where the third inequality in (\ref{iter2}) follows in the same way
than the second inequality in  (\ref{betas}). By performing another
``cleaning" argument similar to the above we may choose $0 < r_3
\leq r_2$ so that (\ref{iter1}) could be improved to
$$
F(D(0,r_3)) \subset \{ U+iV: (V-a_2^{-}\, U)(V-a_2^{+}\, U) \leq 0
\}
$$
where $\displaystyle a_2^{-} = \tan \beta_2^{-}$ and $\displaystyle
a_2^{+} = \tan \beta_{2}^{+}$. In the next step we would obtain
$\displaystyle \beta_3^{-}$ and $\displaystyle \beta_3^{+}$ so that
$$
\pi + \beta_2^{-} < \beta_3^{-} < \beta_3^{+} < \pi + \beta_2^{+}
$$
and $\displaystyle e^{i\beta_3^{-}}$, $\displaystyle
e^{i\beta_3^{+}} \in \mathcal{D}(\mathcal{R}_f)$. Continuing this
procedure we get sequences $\displaystyle \{ \beta_n^{-} \}$ and
$\displaystyle \{ \beta_{n}^{+} \}$ such that  $\displaystyle \{
\beta_{2n-1}^{-} \}$ and $\displaystyle \{ \beta_{2n}^{-} \}$ are
increasing, $\displaystyle \{ \beta_{2n-1}^{+} \}$ and
$\displaystyle \{ \beta_{2n}^{+} \}$ are decreasing, $\displaystyle
e^{i\beta_n^{-}}$, $\displaystyle e^{i\beta_n^{+}} \in
\mathcal{D}(\mathcal{R}_f)$ and

\[
\begin{array}{c *{5}{@{\;<\;}c}}
 0 &\beta_{2n-1}^{-} - \pi & \beta_{2n}^{-} & \beta_{2n}^{+} & \beta_{2n-1}^{+} - \pi & \frac{\pi}{2}
 \\[0.2cm]
 \pi & \pi + \beta_{2n}^{-} & \beta_{2n+1}^{-} & \beta_{2n+1}^{+} & \pi + \beta_{2n}^{+} &\frac{3\pi}{2}
\end{array}
\]

Then it is easy to check that there are $\displaystyle 0 < \beta^{-}
 \leq \beta^{+}  < \pi /2$ such that
$$
\lim_{n\to \infty}\beta_{2n}^{-}  =  \beta^{-}  \, \, \, , \, \, \,
\lim_{n\to \infty} \beta_{2n}^{+}  =  \beta^{+} ,
$$
and
$$
\lim_{n\to \infty}\beta_{2n-1}^{-} =  \pi + \beta^{-} \, \, \, , \,
\, \,  \lim_{n\to \infty} \beta_{2n-1}^{+} =   \pi + \beta^{+}
$$
Since $\displaystyle \mathcal{D}(\mathcal{R}_f)$ is closed, this
implies in particular that the two antipodal points $\displaystyle
e^{i\beta^{-}}$, $\displaystyle e^{i(\pi + \beta^{-})}$ belong to
$\displaystyle \mathcal{D}(\mathcal{R}_f)$, which is the
contradiction we were seeking for and finishes the proof of the
theorem.

\section{Proof of Theorem \ref{thm2}} \label{sec:6}

Let $f = u+iv : \C \to \C$ be a harmonic map and assume, up to a
rotation, that
\begin{equation}\label{half-space}
 \displaystyle \mathcal{D}(\mathcal{R}_f) \subset \{ e^{i\theta}:
\, -\pi \leq \theta \leq 0 \}
\end{equation}
If $u$ were constant then $\mathcal{R}_f$ would be  a vertical line
or a point in the $u-v$ plane and, according to Proposition
\ref{examp}, either $\mathcal{D}(\mathcal{R}_f) = \varnothing$  or
$\mathcal{D}(\mathcal{R}_f) = \{ -i, i\}$ so Theorem \ref{thm2}
would certainly hold in this case. Assume then that $u$ is not
constant. In particular $u$ is unbounded above and below by the
harmonic Liouville Theorem and, consequently, $\mathcal{R}_f$ cannot
be contained in a half-space of the form $\displaystyle \{u\geq c \}
$ or $\displaystyle \{ u \leq c \}$ for any $c\in \R$.

Now, for each $u\in \R$, let $\displaystyle E_u = \{ v\in \R : \, u
+iv \in \mathcal{R}_f \}$. By the preceding comments, continuity of
$f$ and  connectedness of the set $\mathcal{R}_f = f( \C )$, it
follows that $E_u \neq \varnothing$ for each $u\in \R$. We claim
that $E_u$ is bounded above for any $u\in \R$. Otherwise, we could
find $u\in \R$ and a real sequence $\displaystyle \{ v_n \} $ such
that $\displaystyle v_n \to +\infty$ as $n\to +\infty$ and $u +iv_n
\in \mathcal{R}_f$ for all $n$. Then
$$
\lim_{n\to \infty}\frac{u+iv_n}{|u+iv_n |} = i
$$
which, by Proposition \ref{asym-char}, would imply that $i\in
\mathcal{D}(\mathcal{R}_f) $, contradicting  (\ref{half-space}).
This proves the claim and allows to define $\Phi: \R \to [0,
+\infty)$ by

\begin{equation}\label{def h}
\Phi(u) = \max \big \{ \sup E_u , \, 0 \big \}
\end{equation}

\begin{lemma}\label{pro h}
Let $f= u+iv: \C \to \C$ be a harmonic map satisfying
(\ref{half-space}) and let $\Phi$ be as in (\ref{def h}). Then
$\Phi$ is locally bounded and satisfies
\begin{equation}\label{h asymp}
\lim_{|u|\to \infty} \frac{\Phi(u)}{|u|} = 0
\end{equation}
\end{lemma}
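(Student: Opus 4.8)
The plan is to prove the two claims separately, using the basic properties of asymptotic directions from Section \ref{sec:2}, in particular part a) of Proposition \ref{asymp}. For the local boundedness of $\Phi$, I would argue by contradiction: if $\Phi$ were not bounded on some bounded interval $[-T,T]$, then by definition of $\Phi$ one could find $u_n \in [-T,T]$ and $v_n \to +\infty$ with $u_n + iv_n \in \mathcal{R}_f$. Passing to a subsequence so that $u_n$ converges (compactness of $[-T,T]$), the points $w_n = u_n + iv_n$ satisfy $|w_n| \to \infty$ and $|w_n|^{-1} w_n \to i$, so by Proposition \ref{asym-char} we would get $i \in \mathcal{D}(\mathcal{R}_f)$, contradicting (\ref{half-space}). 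This is essentially the same argument already used just before the statement to show each $E_u$ is bounded above, only now made uniform over a bounded interval.

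For the asymptotic estimate (\ref{h asymp}), again I would argue by contradiction. Suppose (\ref{h asymp}) fails; then there exist $\delta > 0$ and a sequence $u_n$ with $|u_n| \to \infty$ such that $\Phi(u_n) \geq \delta |u_n|$. By definition of $\Phi$ (and since $\Phi(u_n) > 0$ for large $n$), we can pick $v_n \in E_u$ with $v_n \geq \tfrac{\delta}{2}|u_n|$, so $w_n = u_n + i v_n \in \mathcal{R}_f$ with $|w_n| \to \infty$. Passing to a subsequence, $|w_n|^{-1} w_n \to e^{i\theta}$ for some $\theta$; from $v_n \geq \tfrac{\delta}{2}|u_n|$ one reads off that $e^{i\theta}$ lies in the closed arc $\{ e^{i\theta} : \theta_0 \leq \theta \leq \pi - \theta_0 \}$ for some $\theta_0 = \theta_0(\delta) \in (0,\pi/2)$ strictly inside the upper half circle. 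But by Proposition \ref{asym-char} this $e^{i\theta} \in \mathcal{D}(\mathcal{R}_f)$, and by (\ref{half-space}) the only asymptotic directions available in the closed upper half circle are $\pm 1$, which are not in that arc — a contradiction. Hence (\ref{h asymp}) holds.

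The only mildly delicate point — and the step I would treat most carefully — is extracting, from the sign/size condition $v_n \geq \tfrac{\delta}{2}|u_n|$, a bound keeping the limiting direction $e^{i\theta}$ bounded away from the endpoints $\pm 1$ of the permitted arc; once that geometric observation is in place, the rest is a routine application of Propositions \ref{asym-char} and the hypothesis (\ref{half-space}). Alternatively, one can phrase both parts cleanly through part a) of Proposition \ref{asymp}: since the open arc $\{ e^{i\theta} : 0 < \theta < \pi \}$ meets $\mathcal{D}(\mathcal{R}_f)$ in at most the empty set, for any closed subarc $J$ strictly inside it there is $\rho > 0$ with $\mathcal{R}_f \cap \{ r e^{i\theta} : r \geq \rho,\ e^{i\theta} \in J \} = \varnothing$; local boundedness and (\ref{h asymp}) both follow by translating this into statements about $\Phi(u)/|u|$. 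I would likely present the direct subsequence argument since it is shortest, but either route works.
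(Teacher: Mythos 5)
Your proposal is correct and follows essentially the same route as the paper: both parts are proved by contradiction, extracting a sequence $w_n = u_n + iv_n \in \mathcal{R}_f$ with $|w_n|\to\infty$ whose normalized directions converge (to $i$ in the first part, to a direction in a closed subarc of the open upper half circle in the second), and invoking Proposition \ref{asym-char} to contradict (\ref{half-space}). The point you flag as delicate --- keeping the limit direction away from $\pm 1$ via $v_n \geq \tfrac{\delta}{2}|u_n|$ --- is exactly how the paper handles it (there via $\arctan\alpha \leq \theta_n \leq \pi/2$), so no gap remains.
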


\begin{proof}
Both conclusions are basically consequence of (\ref{half-space}). If
$\Phi$ were not locally bounded, we could find $M>0$ and two real
sequences $\displaystyle \{ u_n \}$, $\displaystyle \{ v_n \}$ such
that $|u_n | \leq M$, $u_n +iv_n \in \mathcal{R}_f$ for all $n$ and
$v_n \to + \infty$ as $n\to \infty$. Then
$$
\lim_{n\to \infty} \frac{u_n +iv_n}{u_n +iv_n} = i
$$
so, again by Proposition \ref{asym-char}, $i\in
\mathcal{D}(\mathcal{R}_f) $, contradicting (\ref{half-space}). The
proof of (\ref{h asymp}) follows similar lines: suppose that (\ref{h
asymp}) does not hold. Then we can find $\alpha >0$ and a sequence
$\displaystyle \{u_n \} $ such that $|u_n | \to \infty$ and
$\Phi(u_n )
> \alpha |u_n |$ for all $n$. Assume that $u_n \to +\infty$. Then
there exist sequences $\displaystyle \{ v_n \}$ and $\displaystyle
\{ \theta_n \}$ such that $v_n \geq \alpha u_n$, \, $\displaystyle
\arctan \alpha \leq \theta_n \leq \pi /2$, \, $u_n +iv_n \in
\mathcal{R}_f$ and
$$
\frac{u_n +iv_n}{|u_n + iv_n |} = e^{i\theta_n}
$$
for all $n$. Choose a subsequence $\displaystyle \{ n_k \}$ such
that  $\displaystyle e^{i\theta_{n_k}} \to e^{i\theta}$ as $k\to
\infty$. Then  $\displaystyle \arctan \alpha \leq \theta \leq \pi
/2$ and, by Proposition \ref{asym-char}, $\displaystyle e^{i\theta}
\in \mathcal{D}(\mathcal{R}_f)$, which contradicts
(\ref{half-space}). The proof of the lemma is now complete.
\end{proof}

Let $f = u+iv: \C \to \C$ be a harmonic map satisfying
(\ref{half-space}) and let $\Phi$ be as in (\ref{def h}). Observe
that the definition of $\Phi$ implies that
\begin{equation}\label{v-Phi}
v(z) \leq \Phi (u(z))
\end{equation}
for each $z\in \C$. Then Theorem \ref{thm2} is consequence of the
following, even more general result.

\begin{theorem}\label{subhar}
Let $w: \C \to \R$ be subharmonic and $u: \C \to \R$ be harmonic.
Suppose that there exists $\Phi : \R \to [0, +\infty )$ such that
$\Phi$ is locally bounded,
\begin{equation}\label{Phi-asy}
\lim_{|t|\to \infty} \frac{\Phi (t)}{|t|} = 0
\end{equation}
and
\begin{equation}\label{w}
w(z) \leq \Phi (u(z))
\end{equation}
for all $z\in \C$. Then $w$ is constant.
\end{theorem}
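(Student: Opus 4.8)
The plan is to prove that $w$ is bounded above on $\C$; once this is known, $w$ is constant by the classical Liouville theorem for subharmonic functions (the circular means of $w$ are non-decreasing and convex in $\log r$, hence constant, which forces $w$ to be harmonic and then constant by the harmonic Liouville theorem).

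First I would record the two elementary consequences of the assumptions on $\Phi$ that will be used: since $\Phi$ is locally bounded, $K_0:=\sup_{\abs t\leq 1}\Phi(t)<\infty$; and by (\ref{Phi-asy}) together with local boundedness, for every $\varepsilon>0$ both $\sup_{t\geq 0}(\Phi(t)-\varepsilon t)$ and $\sup_{t\leq 0}(\Phi(t)+\varepsilon t)$ are finite (the quantity being maximized tends to $-\infty$ as $\abs t\to\infty$ and is bounded on bounded sets). If $u$ is constant then $w\leq\Phi(u(0))$ is already bounded above, so I may assume $u$ is non-constant; then by the harmonic Liouville theorem $u$ is unbounded above and below, so the open sets $\Omega^{+}=\{u>0\}$ and $\Omega^{-}=\{u<0\}$ are both non-empty, each has non-empty complement in $\C$, and $\partial\Omega^{\pm}\subset\{u=0\}$.

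The technical core is an extended maximum principle on unbounded domains: \emph{if $\Omega\subsetneq\C$ is open with $\C\setminus\Omega\neq\varnothing$, $\psi$ is subharmonic and bounded above on $\Omega$, and $\limsup_{z\to\zeta}\psi(z)\leq M$ for every $\zeta\in\partial\Omega$, then $\psi\leq M$ on $\Omega$.} To prove this I would extend $\psi$ to $\C$ by setting $\widetilde\psi=\max(\psi,M)$ on $\Omega$ and $\widetilde\psi\equiv M$ on $\C\setminus\Omega$, and check that $\widetilde\psi$ is subharmonic on $\C$: it is upper semicontinuous (at a point of $\partial\Omega$ this uses the hypothesis $\limsup\psi\leq M$), and it satisfies the sub-mean value inequality at every point (at a point of $\partial\Omega$ this is immediate because $\widetilde\psi\geq M$ everywhere). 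Since $\widetilde\psi$ is subharmonic on $\C$ and bounded above it is constant, and evaluating at any point of $\C\setminus\Omega$ shows $\widetilde\psi\equiv M$, that is, $\psi\leq M$ on $\Omega$.

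With this in hand I would finish as follows. Fix $\varepsilon>0$. On $\Omega^{+}$ the function $w-\varepsilon u$ is subharmonic and, because $u>0$ there, $w-\varepsilon u\leq\Phi(u)-\varepsilon u\leq\sup_{t>0}(\Phi(t)-\varepsilon t)<\infty$, so it is bounded above; moreover, for $\zeta\in\partial\Omega^{+}$ one has $u(\zeta)=0$, hence for $z$ near $\zeta$ one has $\abs{u(z)}\leq 1$ and therefore $w(z)\leq\Phi(u(z))\leq K_0$, giving $\limsup_{z\to\zeta}(w-\varepsilon u)(z)=\limsup_{z\to\zeta}w(z)\leq K_0$. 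The extended maximum principle yields $w-\varepsilon u\leq K_0$ on $\Omega^{+}$, and letting $\varepsilon\to 0$ (with $u$ finite) gives $w\leq K_0$ on $\Omega^{+}$. The same argument applied to $w+\varepsilon u$ on $\Omega^{-}$ gives $w\leq K_0$ on $\Omega^{-}$, while on $\{u=0\}$ we have $w\leq\Phi(0)\leq K_0$ directly. Hence $w\leq K_0$ on all of $\C$, and $w$ is constant. I expect the extended maximum principle to be the main obstacle, since $\Omega^{\pm}$ may be unbounded and topologically complicated, so the ordinary maximum principle does not apply; the gluing trick above reduces it to Liouville for subharmonic functions on $\C$. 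A secondary point that needs care is that $\Phi$ is only assumed locally bounded (not continuous), which is why one controls $w$ near $\{u=0\}$ by the constant $K_0=\sup_{\abs t\leq 1}\Phi(t)$ rather than by $\Phi(0)$.
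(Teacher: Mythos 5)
Your argument is correct, but it takes a genuinely different route from the paper. The paper quotes a ``relative'' maximum principle from Armitage--Gardiner (Lemma \ref{gardiner}): on each component $\Omega$ of $\{u>0\}$ it sets $s=w-\Phi(0)$ and $h(z)=u(z)+\log\bigl(|z-z_0|/r\bigr)$ with $\overline{D}(z_0,r)\subset\{u<0\}$ disjoint from $\Omega$, and verifies $\limsup s/h\le 0$ both at finite boundary points (where $u=0$) and at the boundary point $\infty$, the latter using exactly the sublinearity (\ref{Phi-asy}); this gives $w\le\Phi(0)$ on $\{u\neq 0\}$ and then constancy via the planar subharmonic Liouville theorem. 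You instead prove, by the gluing trick $\widetilde\psi=\max(\psi,M)$ on $\Omega$, $\widetilde\psi\equiv M$ off $\Omega$, a self-contained Phragm\'en--Lindel\"of-type maximum principle for subharmonic functions that are \emph{bounded above} on a proper open subset of $\C$, and you apply it to the comparison functions $w-\varepsilon u$ on $\{u>0\}$ and $w+\varepsilon u$ on $\{u<0\}$, where sublinearity of $\Phi$ enters only through the finiteness of $\sup_{t\ge 0}(\Phi(t)-\varepsilon t)$ and $\sup_{t\le 0}(\Phi(t)+\varepsilon t)$; letting $\varepsilon\to 0$ gives $w\le\sup_{|t|\le 1}\Phi$ everywhere. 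Both proofs ultimately rest on the parabolicity of the plane (a bounded-above subharmonic function on $\C$ is constant), which your gluing lemma invokes directly, so your version is more elementary and avoids any explicit analysis at $\infty$ and the external citation; the paper's version needs no $\varepsilon$-perturbation and yields the marginally sharper bound $w\le\Phi(0)$, which is irrelevant for the conclusion. Your handling of the possible discontinuity of $\Phi$ (bounding $w$ near $\{u=0\}$ by $\sup_{|t|\le 1}\Phi$ rather than $\Phi(0)$) is sound; alternatively, upper semicontinuity of $w$ gives $\limsup_{z\to\zeta}w(z)\le w(\zeta)\le\Phi(0)$, as in the paper.
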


Observe that Theorem \ref{thm2} follows from Theorem \ref{subhar} by
choosing $w = v$. For the proof of Theorem \ref{subhar} we will need
the following ``relative" Maximum Principle (Theorem 3.1.6 in
\cite{AG}).

\begin{lemma}\label{gardiner}
Let $\Omega \subset \C$ be a domain, $s: \Omega \to \R$ subharmonic
and $h: \Omega \to \R$ harmonic such that $h>0$ in $\Omega$ and
$$
\limsup_{z\to \xi} \frac{s(z)}{h(z)} \leq 0
$$
for each $\xi \in \partial^{\infty} \Omega$. Then $s \leq 0$ in
$\Omega$.
\end{lemma}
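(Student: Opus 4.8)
The plan is to prove this as a boundary (Phragm\'en--Lindel\"of type) maximum principle for the quotient $g = s/h$. Set $M = \sup_{\Omega} g \in (-\infty, +\infty]$; the conclusion $s \leq 0$ in $\Omega$ is precisely $M \leq 0$, so assume for contradiction that $M > 0$. First I would record two elementary facts about $g$: it is upper semicontinuous on $\Omega$, since $\{ g < c \} = \{ s - c\,h < 0 \}$ and $s - c\,h$ is upper semicontinuous (a sum of the upper semicontinuous function $s$ and the continuous function $-c\,h$); and $g < +\infty$ everywhere, because $s$ takes values in $[-\infty, +\infty)$ and $h > 0$.

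Next, pick points $z_n \in \Omega$ with $g(z_n) \to M$. In the one-point compactification $\C \cup \{\infty\}$, the closure $\overline{\Omega}$ is compact and $\partial^{\infty}\Omega = \overline{\Omega} \setminus \Omega$, so after passing to a subsequence we may assume $z_n \to w \in \overline{\Omega}$. If $w \in \Omega$, then upper semicontinuity gives $g(w) \geq \limsup_n g(z_n) = M$; combined with $g(w) < +\infty$ this forces $M < +\infty$ and $g(w) = M$, i.e.\ $s(w) = M\,h(w)$. Then $s - M\,h$ is subharmonic on the (connected) domain $\Omega$, is $\leq 0$ throughout, and equals $0$ at the interior point $w$; by the strong maximum principle for subharmonic functions, $s - M\,h \equiv 0$, so $s \equiv M\,h$ with $M > 0$. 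Since $\partial^{\infty}\Omega \neq \varnothing$ (it contains $\infty$ when $\Omega$ is unbounded and a finite boundary point otherwise), picking any $\xi \in \partial^{\infty}\Omega$ gives $\limsup_{z \to \xi} s(z)/h(z) = M > 0$, contradicting the hypothesis. If instead $w \notin \Omega$, then $w \in \partial^{\infty}\Omega$, and $g(z_n) \to M$ along $z_n \to w$ yields $\limsup_{z \to w} s(z)/h(z) \geq M > 0$, again contradicting the hypothesis. Either way we reach a contradiction, so $M \leq 0$.

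The argument is largely routine; the points that need a little care are the uniform treatment of the possibility $M = +\infty$ (dispatched by the observation $g < +\infty$ pointwise, which rules out the interior case), the passage to the compactification so that the cluster point $w$ is certified to lie in $\Omega \cup \partial^{\infty}\Omega$, and the appeal to connectedness of the domain $\Omega$ so that the subharmonic maximum principle upgrades ``$s = M\,h$ at one point'' to ``$s \equiv M\,h$''. I do not expect a genuine obstacle here: an alternative, and the route taken in \cite{AG}, is to go through $h$-harmonic measure and the associated sub-mean-value inequality for $g$ on relatively compact subdomains, but the direct supremum argument above is self-contained and shorter to present.
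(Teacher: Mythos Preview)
The paper does not prove this lemma at all: it simply quotes it as Theorem 3.1.6 in \cite{AG} and uses it as a black box in the proof of Theorem \ref{subhar}. Your argument is therefore not being compared against a proof in the paper, but it is a correct, self-contained derivation. The supremum-plus-compactification scheme you use is standard and sound: upper semicontinuity of $g=s/h$ is justified exactly as you say, the dichotomy between an interior accumulation point (handled by the strong maximum principle applied to the subharmonic function $s-Mh$ on the connected domain $\Omega$) and a boundary accumulation point in $\partial^{\infty}\Omega$ is clean, and your treatment of the case $M=+\infty$ is adequate since $g(w)<+\infty$ disposes of it in the interior branch while the boundary branch needs only $M>0$. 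The observation that $\partial^{\infty}\Omega\neq\varnothing$ is also correct and needed. As you note, the presentation in \cite{AG} goes instead through $h$-harmonic measure and a relative sub-mean-value inequality; your direct route is shorter and entirely appropriate here, where the lemma is only an auxiliary tool.
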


\begin{proof}[Proof of Theorem \ref{subhar}]
By the subharmonic Liouville Theorem in the plane (see \cite{R},
Chapter 2) it is enough to prove that $w$ is bounded above. If $u$
is constant then the conclusion follows from (\ref{v-Phi}) so we
assume hereafter that $u$ is not constant, therefore unbounded from
above and below. We will actually show that if $\Omega$ is any
component of $\{ u>0 \}$ or $\{ u < 0 \}$ then $w \leq \Phi (0)$ in
$\Omega$.

Let  $\Omega$ be a component of $\{ u>0 \}$. We pick $z_0 \in \C$
and $r>0$ such that $\overline{D}(z_0, r) \cap \Omega = \varnothing$
(choose $z_0$ and $r$ such that $u<0$ in $\overline{D}(z_0, r)$).
Define
$$
s(z) = w(z) -\Phi (0) \, , \, \, \,  h(z) = u(z) + \log \Big (
\frac{|z-z_0 |}{r} \Big )
$$
and note that $s$ is subharmonic in $\Omega$, $h$ is harmonic in
$\Omega$ and $h>0$ in $\Omega$.

We claim that
\begin{equation}\label{s-h}
\limsup_{z\to \xi} \frac{s(z)}{h(z)} \leq 0 \, \, \, \, \, \,
\text{for all} \, \, \, \, \xi \in \partial^{\infty} \Omega
\end{equation}

To prove the claim we distinguish the cases i) $\xi \in \partial
\Omega \cap \C$ and ii) $\xi = \infty$.

Suppose first that  $\xi  \in
\partial \Omega \cap \C$, in particular $u(\xi) = 0 $. From
subharmonicity and (\ref{w})
\begin{equation}\label{limsup s}
\limsup_{z\to \xi} s(z) \leq s(\xi ) = w(\xi ) -\Phi (0) \leq \Phi
(u(\xi ))- \Phi (0) = 0
\end{equation}
On the other hand,
\begin{equation}\label{liminf h}
\liminf_{z\to \xi} h(z) = \liminf_{z\to \xi} \Big ( u(z) + \log \Big
(\frac{|z-z_0 |}{r} \Big ) \Big ) = \log \Big ( \frac{|z -z_0 |}{r}
\Big )
>0
\end{equation}
so case i)  follows from (\ref{limsup s}) and (\ref{liminf h}).

For case ii) we need to show that
\begin{equation}\label{inf}
\limsup_{\substack{z\to \infty \\ z\in \Omega }} \frac{w(z) - \Phi
(0)}{u(z) + \log \big ( \frac{|z -z_0 |}{r} \big )} \leq 0
\end{equation}
Fix $\varepsilon >0$. From (\ref{Phi-asy}) we can choose $t_0 >0$
such that $\Phi (t) < \varepsilon |t|$ if $|t| \geq t_0$. Let
$$
M_0 = \sup_{[0, t_0]}\Phi
$$ (note that $\Phi$ is locally bounded) and take $|z|$ large enough so that
\begin{equation}\label{choose z}
|z -z_0 | > r \exp{ \big ( \frac{M_0}{\varepsilon} \big )}
\end{equation}

If $u(z)
>t_0$ then
$$
\frac{\Phi (u(z)) -\Phi (0)}{u(z) + \log \big ( \frac{|z -z_0 |}{r}
\big )} \leq \varepsilon
$$
If $0 < u(z) \leq t_0$ then, from (\ref{choose z})
$$
\frac{\Phi (u(z)) -\Phi (0)}{u(z) + \log \big ( \frac{|z -z_0 |}{r}
\big )} \leq \frac{M_0}{\log \big ( \frac{|z -z_0 |}{r} \big )} <
\varepsilon
$$
so case ii) also follows. The hypothesis of Lemma \ref{gardiner} are
then fulfilled and we get $s \leq 0$ in $\Omega$ or, equivalently,
$w \leq \Phi (0)$ in $\Omega$. Since $\Omega$ is arbitrary this
proves that $w \leq \Phi$ in $\{ u>0 \}$. An analogous argument,
replacing $u$ by $-u$ would show that $w \leq \Phi (0)$ in any
component of $\{u < 0 \} $. Then $w \leq \Phi (0)$ in $\C$ and
therefore, $w$ is constant.
\end{proof}

\section{Proof of Theorem \ref{thm3}}\label{sec:7}
Note first that $v$ cannot be constant because in that case
(\ref{kuran}) would imply that $u$ is bounded, therefore constant.
Suppose that $u$ is a harmonic polynomial of degree $n$. By well
known properties of the tracts of harmonic polynomials in the
plane(see \cite{BFHK}) we can choose $R' \geq R$ such that $
\displaystyle \{ u \neq 0 \} \setminus \overline{D}(0, R') =
\bigcup_{i=1}^{2n} G_i$ where each $G_i$ is a ``sector-like"
connected component. On the other hand, assumption (\ref{kuran})
implies
\begin{equation}\label{zero set}
\{ v = 0 \} \cap \big ( \C \setminus D(0, R) \big ) \subset \{ u = 0
\}
\end{equation}

We claim that the set $\{ v \neq 0 \}$ has  a finite number of
components and therefore $v$ is a harmonic polynomial, by Theorem
$1$ in \cite{BFHK}.

It is well known, from the Maximum Principle, that the components of
$\{ v \neq 0 \}$ are nonempty and unbounded. Let $\Omega $ be one
such component and pick $z_0 \in \Omega$ such that $|z_0 | > R'$ and
$u(z_0 )  \neq 0$. Then there is a unique $i\in \{ 1,\cdots, 2n \} $
such that $z_0 \in G_i$ and the connectedness of $G_i$ implies that
$G_i \subset \Omega$. Since the correspondence $\Omega \to G_i$ is
$1-1$ it follows that $\{v \neq 0 \} $ has finitely many components
and therefore $v$ is also a harmonic polynomial.

Consider now the harmonic polynomials $u - av$ and $u +av$ and
observe that $(av -u)(av +u) \geq 0$ outside a disc. By the
Murdoch-Kuran theorem (or perhaps a more elementary argument) we
deduce that $av +u = \lambda (av -u)$ for some $\lambda \geq 0$,
which implies that $u = bv$ for some $b\in \R$ with $|b| \leq a$, as
desired.

\end{document}